\documentclass[10pt]{amsart}
\usepackage{amssymb,MnSymbol}
\usepackage{amsthm,amsmath}
\usepackage{epic,cite}

\title[Characterizations of idempotent discrete uninorms]{Characterizations of idempotent discrete uninorms}

\author{Miguel Couceiro}
\address{LORIA, CNRS - Inria Nancy Grand Est - Universit\'e de Lorraine, BP239, 54506 Vandoeuvre-l\`es-Nancy, France} \email{miguel.couceiro[at]inria.fr }

\author{Jimmy Devillet}
\address{Mathematics Research Unit, University of Luxembourg, Maison du Nombre, 6, avenue de la Fonte, L-4364 Esch-sur-Alzette, Luxembourg}
\email{jimmy.devillet[at]uni.lu}

\author{Jean-Luc Marichal}
\address{Mathematics Research Unit, University of Luxembourg, Maison du Nombre, 6, avenue de la Fonte, L-4364 Esch-sur-Alzette, Luxembourg}
\email{jean-luc.marichal[at]uni.lu}

\date{June 27, 2017}

\theoremstyle{plain}
\newtheorem{theorem}{Theorem}
\newtheorem{lemma}[theorem]{Lemma}
\newtheorem{proposition}[theorem]{Proposition}
\newtheorem{corollary}[theorem]{Corollary}

\theoremstyle{definition}
\newtheorem{definition}[theorem]{Definition}
\newtheorem{example}[theorem]{Example}

\theoremstyle{remark}

\newtheorem{remark}{Remark}

\begin{document}
\begin{abstract}
In this paper we provide an axiomatic characterization of the idempotent discrete uninorms by means of three conditions only: conservativeness, symmetry, and nondecreasing monotonicity. We also provide an alternative characterization involving the bisymmetry property. Finally, we provide a graphical characterization of these operations in terms of their contour plots, and we mention a few open questions for further research.
\end{abstract}

\keywords{Discrete uninorm, idempotency, conservativeness, bisymmetry, contour plot, axiomatization.}

\subjclass[2010]{20M14.}

\maketitle

\setlength{\unitlength}{8ex}

\section{Introduction}

Aggregation functions defined on linguistic scales (i.e., finite chains) have been intensively investigated for about two decades; see, e.g., \cite{DeBFodRuiTor09,DeBMes03,Fod00,LiLiuFod15,MasMayTor99,MasMonTor03,MasMonTor04,MasMonTor11,MaySunTor05,MayTor05,RuiTor15,SuLiu17}. Among these functions, discrete fuzzy connectives (such as discrete uninorms) are binary operations that play an important role in fuzzy logic.

This short paper focuses on characterizations of the class of idempotent discrete uninorms. Recall that a discrete uninorm is a binary operation on a finite chain that is associative, symmetric, nondecreasing (in each variable), and has a neutral element.

A first characterization of the class of idempotent discrete uninorms was given by De Baets et al.~\cite{DeBFodRuiTor09}. This characterization reveals that any idempotent discrete uninorm is a combination of the minimum and maximum operations. In particular, such an operation is \emph{conservative} in the sense that it always outputs one of the input values.

The outline of this paper is as follows. After presenting some preliminary results on conservative operations in Section 2, we show in Section 3 that the idempotent discrete uninorms are exactly those operations that are conservative, symmetric, and nondecreasing (Theorem~\ref{thm:main}). This new axiomatic characterization is rather surprising since it requires neither associativity nor the existence of a neutral element. We also present a graphical characterization of these operations in terms of their contour plots (Theorem~\ref{thm:GC}) as well as an algebraic translation of this characterization (Theorem~\ref{thm:QoB}). These characterizations show us a very easy way to generate all the possible idempotent discrete uninorms on a given finite chain. In Section 4 we provide an alternative axiomatic characterization of this class in terms of the bisymmetry property. Specifically, we show that the idempotent discrete uninorms are exactly those operations that are idempotent, bisymmetric, nondecreasing, and have neutral elements. More generally, we also show that the whole class of discrete uninorms can also be axiomatized by simply suppressing idempotency in the latter characterization and that this result also holds on arbitrary chains (Theorem~\ref{thm:mainb}). Finally, Section 5 is devoted to some concluding remarks and open questions.

This paper is an extended version of a conference paper that appeared in \cite{CouDevMar18}.

\section{Preliminaries}

In this section we present some basic definitions and preliminary results.

Let $X$ be an arbitrary nonempty set and let $\Delta_X=\{(x,x)\mid x\in X\}$.

\begin{definition}
An operation $F\colon X^2\to X$ is said to be
\begin{itemize}
\item \emph{idempotent} if $F(x,x)=x$ for all $x\in X$.
\item \emph{conservative} (or \emph{selective}) if $F(x,y)\in\{x,y\}$ for all $x,y\in X$.
\item \emph{associative} if $F(F(x,y),z)=F(x,F(y,z))$ for all $x,y,z\in X$.
\end{itemize}
\end{definition}

\begin{remark}\label{rem:ConsJ}
Conservative operations were introduced first in \cite{PouRosSto96} and then independently as \emph{locally internal} operations in \cite{MarMayTor03}. By definition, the output value of such an operation must always be one of the input values. In particular, any conservative operation is idempotent. Moreover, such an operation $F\colon X^2\to X$ can be ``discreticized'' in the sense that, for any nonempty subset $S$ of $X$, the restriction of $F$ to $S^2$ ranges in $S$. More precisely, it can be shown \cite{CouMar10} that the following conditions are equivalent:
\begin{enumerate}
\item[(i)] $F$ is conservative.
\item[(ii)] For any $\varnothing\neq S\subseteq X$, we have $F(S^2)\subseteq S$.
\item[(iii)] For any $\varnothing\neq S\subseteq X$ and any $x,y\in S$, if $F(x,y)\in S$ then $x\in S$ or $y\in S$.
\end{enumerate}
\end{remark}

\begin{definition}
Let $F\colon X^2\to X$ be an operation.
\begin{itemize}
\item An element $e\in X$ is said to be a \emph{neutral element} of $F$ (or simply a \emph{neutral element}) if $F(x,e)=F(e,x)=x$ for all $x\in X$. In this case we easily show by contradiction that such a neutral element is unique.
\item The points $(x,y)$ and $(u,v)$ of $X^2$ are said to be \emph{connected for $F$} (or simply \emph{connected}) if $F(x,y)=F(u,v)$. We observe that ``being connected'' is an equivalence relation. The point $(x,y)$ of $X^2$ is said to be \emph{isolated for $F$} (or simply \emph{isolated}) if it is not connected to another point in $X^2$.
\end{itemize}
\end{definition}

\begin{proposition}\label{prop:IdIs}
Let $F\colon X^2\to X$ be an idempotent operation. If the point $(x,y)\in X^2$ is isolated, then it lies on $\Delta_X$, that is, $x=y$.
\end{proposition}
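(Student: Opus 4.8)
The plan is to exploit idempotency to produce, from the point $(x,y)$, a companion point on the diagonal $\Delta_X$ that is forced to be connected to it. Concretely, I would set $z=F(x,y)$ and note that idempotency gives $F(z,z)=z=F(x,y)$. Hence the points $(x,y)$ and $(z,z)$ are connected for $F$ in the sense of the definition above.

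Now I invoke the hypothesis that $(x,y)$ is isolated: since it is connected to $(z,z)$, we must have $(x,y)=(z,z)$, i.e.\ $x=z$ and $y=z$. In particular $x=y$ (both equal $z=F(x,y)$), which is exactly the desired conclusion. I do not anticipate any real obstacle here: the only thing to observe is the elementary fact that, for an idempotent operation, every point $(x,y)$ is automatically connected to the diagonal point $(F(x,y),F(x,y))$, so an isolated point has to coincide with that diagonal point and therefore lie on $\Delta_X$. No case analysis, associativity, or further structure on $X$ is needed.
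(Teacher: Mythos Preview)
Your proof is correct and is essentially identical to the paper's own argument: the paper also observes that idempotency gives $F(x,y)=F(F(x,y),F(x,y))$, so isolation forces $(x,y)=(F(x,y),F(x,y))$ and hence $x=y$. The only cosmetic difference is that you name $z=F(x,y)$ explicitly.
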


\begin{proof}
Let $(x,y)\in X^2$ be an isolated point. By idempotency we immediately have $F(x,y)=F(F(x,y),F(x,y))$. But since $(x,y)$ is isolated we necessarily have $(x,y)=(F(x,y),F(x,y))$, and therefore $x=y$.
\end{proof}

\begin{remark}
We observe that idempotency is necessary in Proposition~\ref{prop:IdIs}. Indeed, consider the operation $F\colon X^2\to X$, where $X=\{a,b\}$, defined as $F(x,y)=a$, if $(x,y)=(a,b)$, and $F(x,y)=b$, otherwise. Then $(a,b)$ is isolated and $a\neq b$. The contour plot of $F$ is represented in Figure~\ref{fig:1}. Here and throughout, connected points are joined by edges. To keep the figures simple we sometimes omit the edges obtained by transitivity.
\end{remark}

\begin{figure}[tbp]
\begin{center}
\begin{scriptsize}
\begin{picture}(2,2)
\multiput(0.5,0.5)(0,1){2}{\multiput(0,0)(1,0){2}{\circle*{0.12}}}%
\put(0.5,0.4){\makebox(0,0)[t]{$(a,a)$}}
\put(1.5,0.4){\makebox(0,0)[t]{$(b,a)$}}
\put(0.5,1.4){\makebox(0,0)[t]{$(a,b)$}}
\put(1.5,1.4){\makebox(0,0)[t]{$(b,b)$}}
\drawline[1](0.5,0.5)(1.5,0.5)(1.5,1.5)
\end{picture}
\end{scriptsize}
\caption{A non-idempotent operation}
\label{fig:1}
\end{center}
\end{figure}
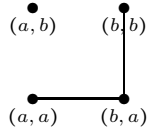

We note that any conservative operation $F\colon X^2\to X$ has at most one isolated point. This observation immediately follows from Proposition~\ref{prop:IdIs} and the fact that $F(x,y)\in\{x,y\}=\{F(x,x),F(y,y)\}$ for all $x,y\in X$.

Some conservative operations have neutral elements (e.g., $F(x,y)=\max\{x,y\}$ on $X=\{1,2,3\}$ has the neutral element $e=1$) and some have not (e.g., $F(x,y)=x$). The following lemma provides an easy graphical test for checking whether a conservative operation has a neutral element.

\begin{proposition}\label{prop:ee}
Let $F\colon X^2\to X$ be a conservative operation and let $e\in X$. Then $e$ is a neutral element if and only if $(e,e)$ is isolated.
\end{proposition}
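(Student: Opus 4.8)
The statement is an equivalence, so I would prove the two implications separately. For the forward direction, suppose $e$ is a neutral element of $F$. I want to show $(e,e)$ is isolated, i.e., that no point $(x,y) \neq (e,e)$ satisfies $F(x,y) = F(e,e)$. Since $e$ is neutral and $F$ is idempotent (being conservative), $F(e,e) = e$. So suppose $F(x,y) = e$ for some $(x,y) \in X^2$. By conservativeness, $e = F(x,y) \in \{x,y\}$, so either $x = e$ or $y = e$. If $x = e$, then $F(e,y) = e$; but the neutral element property gives $F(e,y) = y$, so $y = e$ and $(x,y) = (e,e)$. The case $y = e$ is symmetric. Hence $(e,e)$ is isolated.

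For the converse, suppose $(e,e)$ is isolated; I must show $F(x,e) = F(e,x) = x$ for every $x \in X$. Fix $x \in X$. By conservativeness, $F(x,e) \in \{x,e\}$. If $F(x,e) = e$, then $F(x,e) = e = F(e,e)$ (using idempotency), which connects the point $(x,e)$ to $(e,e)$; since $(e,e)$ is isolated this forces $(x,e) = (e,e)$, i.e., $x = e$, and then trivially $F(x,e) = x$. If instead $F(x,e) \neq e$, then $F(x,e) = x$ directly. Either way $F(x,e) = x$, and the argument for $F(e,x) = x$ is identical (swapping the roles of the two arguments in the conservativeness step). Therefore $e$ is a neutral element.

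I expect no serious obstacle here: the whole proof rests on the single mechanism that, for a conservative idempotent operation, the fiber $F^{-1}(e)$ can only contain points having $e$ as one of their coordinates, combined with Proposition~\ref{prop:IdIs} (via the already-noted consequence that an isolated point of an idempotent operation lies on $\Delta_X$). The one point to state carefully is that conservativeness implies idempotency, so that $F(e,e) = e$ is available; this is recorded in Remark~\ref{rem:ConsJ}. It may be cleanest to phrase the converse directly through Proposition~\ref{prop:IdIs}: if $F(x,e) = e$ then $(x,e)$ is connected to $(e,e)$, hence $(x,e)$ is not isolated, hence $(e,e)$ is not isolated — contradiction — unless $(x,e) = (e,e)$.
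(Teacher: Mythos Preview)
Your proof is correct and follows essentially the same approach as the paper's: both directions use that conservativeness forces $F(x,y)=e$ to imply $e\in\{x,y\}$, together with $F(e,e)=e$ from idempotency. The only cosmetic difference is that the paper phrases each direction as a proof by contradiction (contrapositive), whereas you argue directly.
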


\begin{proof}
(Necessity) If $(e,e)$ is not isolated, then there exists $(x,y)\neq (e,e)$ such that $e=F(e,e)=F(x,y)\in\{x,y\}$ and hence $x=e$ or $y=e$. If $x=e$, then $y\neq e$ and $e=F(e,y)=y$, a contradiction. We arrive at a similar contradiction when $y=e$.

(Sufficiency) If $e$ is not a neutral element, then there exists $u\in X\setminus\{e\}$ such that $F(u,e)=e=F(e,e)$ or $F(e,u)=e=F(e,e)$. In both cases, $(e,e)$ is not isolated, a contradiction.
\end{proof}

\begin{corollary}
Any isolated point $(x,y)$ of a conservative operation $F\colon X^2\to X$ is unique and lies on $\Delta_X$. Moreover, the element $x{\,}(=y)$ is the neutral element of $F$.
\end{corollary}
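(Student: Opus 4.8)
The plan is to obtain this statement as an immediate consequence of Propositions~\ref{prop:IdIs} and~\ref{prop:ee}, together with the fact (recorded in Remark~\ref{rem:ConsJ}) that every conservative operation is idempotent. So the real work has already been done; what remains is to assemble the three pieces in the right order.

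First I would establish that an isolated point must lie on $\Delta_X$. Since $F$ is conservative, it is idempotent by Remark~\ref{rem:ConsJ}, so Proposition~\ref{prop:IdIs} applies verbatim: any isolated point $(x,y)$ satisfies $x=y$. Next I would treat uniqueness. Suppose $(x,x)$ and $(y,y)$ are both isolated. By Proposition~\ref{prop:ee}, each of $x$ and $y$ is then a neutral element of $F$; but a neutral element is unique (as observed right after its definition), whence $x=y$. Thus $F$ has at most one isolated point, and it lies on the diagonal. (This is also exactly the observation made in the paragraph preceding Proposition~\ref{prop:ee}, so one could simply cite it there.)

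Finally, the ``moreover'' clause is immediate from Proposition~\ref{prop:ee} in the other direction: the isolated point being of the form $(x,x)$, that proposition tells us $x$ is the neutral element of $F$.

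I do not expect any genuine obstacle here; the statement is a corollary in the literal sense. The only points requiring a little care are: (i) one must invoke idempotency through Remark~\ref{rem:ConsJ} before being allowed to apply Proposition~\ref{prop:IdIs}, since that proposition is stated for idempotent operations rather than conservative ones; and (ii) the uniqueness claim is cleanest when derived from the uniqueness of the neutral element via Proposition~\ref{prop:ee}, rather than argued from scratch.
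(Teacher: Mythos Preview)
Your proposal is correct and matches the paper's intended derivation: the corollary is stated without proof precisely because it follows immediately from Proposition~\ref{prop:IdIs} (via idempotency from Remark~\ref{rem:ConsJ}) and Proposition~\ref{prop:ee}. The only minor difference is that the paper's paragraph before Proposition~\ref{prop:ee} obtains uniqueness directly from $F(x,y)\in\{F(x,x),F(y,y)\}$ rather than via uniqueness of the neutral element, but you already noted that alternative.
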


\begin{remark}
Proposition~\ref{prop:ee} no longer holds if conservativeness is relaxed into idempotency. Indeed, by simply taking $X=\{a,b,c\}$ we can easily construct an idempotent operation with an isolated point on $\Delta_X$ and no neutral element (see Figure~\ref{fig:2}). Also, it is easy to construct an idempotent operation with a neutral element and no isolated point (see Figure~\ref{fig:3}). It is also noteworthy that there are idempotent operations with more than one isolated point (see Figure~\ref{fig:4}).
\end{remark}

\begin{figure}[tbp]
\begin{center}
\begin{scriptsize}
\begin{picture}(3,3)
\multiput(0.5,0.5)(0,1){3}{\multiput(0,0)(1,0){3}{\circle*{0.12}}}%
\put(0.5,0.4){\makebox(0,0)[t]{$(a,a)$}}
\put(1.5,0.4){\makebox(0,0)[t]{$(b,a)$}}
\put(2.5,0.4){\makebox(0,0)[t]{$(c,a)$}}
\put(0.5,1.4){\makebox(0,0)[t]{$(a,b)$}}
\put(1.5,1.4){\makebox(0,0)[t]{$(b,b)$}}
\put(2.5,1.4){\makebox(0,0)[t]{$(c,b)$}}
\put(0.5,2.4){\makebox(0,0)[t]{$(a,c)$}}
\put(1.5,2.4){\makebox(0,0)[t]{$(b,c)$}}
\put(2.5,2.4){\makebox(0,0)[t]{$(c,c)$}}
\drawline[1](2.5,1.5)(2.5,2.5)(1.5,2.5)
\drawline[1](2.5,0.5)(1.5,0.5)(1.5,1.5)(0.5,1.5)(0.5,2.5)
\end{picture}
\end{scriptsize}
\caption{An operation with no neutral element}
\label{fig:2}
\end{center}
\end{figure}
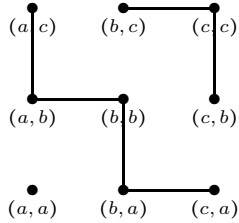

\begin{figure}[tbp]
\begin{minipage}{0.49\textwidth}
\begin{center}
\begin{scriptsize}
\begin{picture}(3,3)
\multiput(0.5,0.5)(0,1){3}{\multiput(0,0)(1,0){3}{\circle*{0.12}}}%
\put(0.5,0.4){\makebox(0,0)[t]{$(a,a)$}}
\put(1.5,0.4){\makebox(0,0)[t]{$(b,a)$}}
\put(2.5,0.4){\makebox(0,0)[t]{$(c,a)$}}
\put(0.5,1.4){\makebox(0,0)[t]{$(a,b)$}}
\put(1.5,1.4){\makebox(0,0)[t]{$(b,b)$}}
\put(2.5,1.4){\makebox(0,0)[t]{$(c,b)$}}
\put(0.5,2.4){\makebox(0,0)[t]{$(a,c)$}}
\put(1.5,2.4){\makebox(0,0)[t]{$(b,c)$}}
\put(2.5,2.4){\makebox(0,0)[t]{$(c,c)$}}
\drawline[1](2.5,1.5)(2.5,2.5)(1.5,2.5)
\drawline[1](1.5,0.5)(0.5,0.5)(0.5,1.5)
\drawline[1](2.5,0.5)(0.5,2.5)
\end{picture}
\end{scriptsize}
\caption{An operation with no isolated point}
\label{fig:3}
\end{center}
\end{minipage}
\hfill
\begin{minipage}{0.49\textwidth}
\begin{center}
\begin{scriptsize}
\begin{picture}(3,3)
\multiput(0.5,0.5)(0,1){3}{\multiput(0,0)(1,0){3}{\circle*{0.12}}}%
\put(0.5,0.4){\makebox(0,0)[t]{$(a,a)$}}
\put(1.5,0.4){\makebox(0,0)[t]{$(b,a)$}}
\put(2.5,0.4){\makebox(0,0)[t]{$(c,a)$}}
\put(0.5,1.4){\makebox(0,0)[t]{$(a,b)$}}
\put(1.5,1.4){\makebox(0,0)[t]{$(b,b)$}}
\put(2.5,1.4){\makebox(0,0)[t]{$(c,b)$}}
\put(0.5,2.4){\makebox(0,0)[t]{$(a,c)$}}
\put(1.5,2.4){\makebox(0,0)[t]{$(b,c)$}}
\put(2.5,2.4){\makebox(0,0)[t]{$(c,c)$}}
\drawline[1](1.5,2.5)(0.5,2.5)(0.5,1.5)(2.5,1.5)(2.5,0.5)(1.5,0.5)(1.5,2.5)
\end{picture}
\end{scriptsize}
\caption{An operation with two isolated points}
\label{fig:4}
\end{center}
\end{minipage}
\end{figure}

\begin{proposition}\label{prop:Tcons}
An operation $F\colon X^2\to X$ is conservative if and only if it is idempotent and every point $(x,y)\in X^2\setminus\Delta_X$ is connected to either $(x,x)$ or $(y,y)$.
\end{proposition}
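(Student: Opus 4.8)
The plan is to prove the two implications directly from the definitions, since the statement is essentially a reformulation of conservativeness in the language of connected points introduced above.

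For the necessity part, I would first recall from Remark~\ref{rem:ConsJ} that every conservative operation is idempotent, so it remains to check the connectedness condition. Given a point $(x,y)\in X^2\setminus\Delta_X$, conservativeness yields $F(x,y)\in\{x,y\}$, and idempotency gives $x=F(x,x)$ and $y=F(y,y)$. Hence either $F(x,y)=F(x,x)$ or $F(x,y)=F(y,y)$, which is exactly the assertion that $(x,y)$ is connected to $(x,x)$ or to $(y,y)$.

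For the sufficiency part, I would take an arbitrary point $(x,y)\in X^2$ and show $F(x,y)\in\{x,y\}$. If $(x,y)\in\Delta_X$, i.e.\ $x=y$, then idempotency gives $F(x,y)=F(x,x)=x\in\{x,y\}$. If $(x,y)\notin\Delta_X$, the hypothesis says $(x,y)$ is connected to $(x,x)$ or to $(y,y)$; using idempotency to evaluate $F$ at these diagonal points, this means $F(x,y)=x$ or $F(x,y)=y$. In either case $F(x,y)\in\{x,y\}$, so $F$ is conservative.

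I do not expect any real obstacle here: the only ingredient beyond the definitions is the implication ``conservative $\Rightarrow$ idempotent'' from Remark~\ref{rem:ConsJ}, together with the elementary observation (already noted just before Proposition~\ref{prop:ee}) that $\{x,y\}=\{F(x,x),F(y,y)\}$ for an idempotent $F$. The proof is short and purely a translation between the two formulations.
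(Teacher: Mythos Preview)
Your proof is correct and takes essentially the same approach as the paper: both arguments simply unwind the definitions, observing that for idempotent $F$ the condition $F(x,y)\in\{x,y\}$ is equivalent to $F(x,y)=F(x,x)$ or $F(x,y)=F(y,y)$. The paper compresses this into a single sentence, while you spell out the two implications separately, but the content is identical.
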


\begin{proof}
Clearly, $F$ is conservative if and only if it is idempotent and for every distinct $x,y\in X$ we have either $F(x,y)=x=F(x,x)$ or $F(x,y)=y=F(y,y)$.
\end{proof}

\begin{remark}
Proposition~\ref{prop:Tcons} provides an easy graphical test for checking whether an idempotent operation is conservative. For instance, none of the idempotent operations represented in Figures~\ref{fig:2}--\ref{fig:4} is conservative because in each case the point $(a,c)$ is connected to neither $(a,a)$ nor $(c,c)$.
\end{remark}

\begin{proposition}\label{prop:Te3}
An operation $F\colon X^2\to X$ has a neutral element if and only if there are a vertical section and a horizonal section of $X^2$ that intersect on $\Delta_X$ and such that the restriction of $F$ to each of these sections is the identity function.
\end{proposition}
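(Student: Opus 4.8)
The statement is essentially a reformulation of the definition of a neutral element in graphical terms, so I expect the proof to be short and to proceed by unwinding the two sides into equivalent pointwise statements. The plan is to prove both implications directly.

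First I would treat necessity. Suppose $F$ has a neutral element $e\in X$. Then by definition $F(x,e)=x$ for all $x\in X$, which says precisely that the restriction of $F$ to the horizontal section $\{(x,e)\mid x\in X\}$ is the identity function (in the first variable); likewise $F(e,y)=y$ for all $y\in X$ says that the restriction of $F$ to the vertical section $\{(e,y)\mid y\in X\}$ is the identity function. These two sections intersect exactly at the point $(e,e)$, which lies on $\Delta_X$. So the required vertical and horizontal sections exist.

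Conversely, suppose there is a vertical section $\{(a,y)\mid y\in X\}$ and a horizontal section $\{(x,b)\mid x\in X\}$ that meet on $\Delta_X$, forcing $a=b=:e$, and on which $F$ restricts to the identity. ``Restricts to the identity'' on the vertical section through $e$ means $F(e,y)=y$ for all $y\in X$, and on the horizontal section through $e$ means $F(x,e)=x$ for all $x\in X$. Together these two conditions are exactly the statement that $e$ is a neutral element of $F$.

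I do not anticipate a genuine obstacle here; the only point requiring a word of care is the phrase ``the restriction of $F$ to each of these sections is the identity function,'' which must be read as identifying a vertical (resp.\ horizontal) section with a copy of $X$ via its free coordinate, so that ``identity'' means $F(e,y)=y$ (resp.\ $F(x,e)=x$). Once that reading is fixed, both directions are immediate translations. I would also note in passing that no idempotency or conservativeness hypothesis is needed for this proposition, in contrast with Proposition~\ref{prop:ee}.
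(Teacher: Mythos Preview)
Your proposal is correct and matches the paper's own proof, which simply states that the result immediately follows from the definition of a neutral element. Your explicit unwinding of both implications is exactly the content behind that one-line justification.
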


\begin{proof}
The result immediately follows from the definition of a neutral element.
\end{proof}

\begin{remark}
Proposition~\ref{prop:Te3} provides a graphical test for checking the existence of a neutral element. For instance, we can easily see that the operation represented in Figure~\ref{fig:3} has $b$ as the neutral element. Note that when the operation is conservative, by Proposition~\ref{prop:ee} it suffices to search for an isolated point on $\Delta_X$.
\end{remark}

\section{Main results}

We now focus on characterizations of the class of idempotent discrete uninorms. These operations are defined on finite chains. Without loss of generality we will only consider the $n$-element chains $L_n=\{1,\ldots,n\}$, $n\geq 1$, endowed with the usual ordering relation $\leq$.

Recall that an operation $F\colon L_n^2\to L_n$ is said to be \emph{nondecreasing in each variable} (or simply \emph{nondecreasing}) if $F(x,y)\leq F(x',y')$ whenever $x\leq x'$ and $y\leq y'$.

\begin{definition}[see, e.g., \cite{DeBFodRuiTor09}]
A \emph{discrete uninorm} on $L_n$ is an operation $U\colon L_n^2\to L_n$ that is associative, symmetric, nondecreasing, and has a neutral element.
\end{definition}

A first characterization of the class of idempotent discrete uninorms was established by De Baets et al.~\cite{DeBFodRuiTor09}. We state this result in the following theorem. Although this characterization is somewhat intricate, it shows that any idempotent discrete uninorm is conservative.

\begin{theorem}[{see \cite[Theorem~3]{DeBFodRuiTor09}}]\label{thm:Deb1}
An operation $F\colon L_n^2\to L_n$ with a neutral element $e$ is an idempotent discrete uninorm if and only if there exists a nonincreasing map $g\colon\{1,\ldots,e\}\to\{e,\ldots,n\}$ (nonincreasing means that $g(x)\geq g(y)$ whenever $x\leq y$), with $g(e)=e$, such that
$$
F(x,y) ~=~
\begin{cases}
\min\{x,y\}, & \text{if $y\leq\overline{g}(x)$ and $x\leq\overline{g}(1)$},\\
\max\{x,y\}, & \text{otherwise},
\end{cases}
$$
where $\overline{g}\colon L_n\to L_n$ is defined by
$$
\overline{g}(x) ~=~
\begin{cases}
g(x), & \text{if $x\leq e$},\\
\max\{z\in\{1,\ldots,e\}\mid g(z)\geq x\}, & \text{if $e\leq x\leq g(1)$},\\
1, & \text{if $x>g(1)$}.
\end{cases}
$$
\end{theorem}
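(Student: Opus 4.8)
The plan is to prove both implications directly: for the ``only if'' part, to extract the map $g$ from a given idempotent discrete uninorm $U$ with neutral element $e$, and for the ``if'' part, to verify the uninorm axioms for the operation $F$ defined by the displayed formula.

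For the ``only if'' direction, I would first record two immediate consequences of monotonicity and idempotency: for $x\le y\le e$ one has $x=U(x,x)\le U(x,y)\le U(x,e)=x$, so $U$ coincides with $\min$ on $\{1,\dots,e\}^2$, and dually $U=\max$ on $\{e,\dots,n\}^2$. The crucial step is to show that $U$ is conservative on the ``mixed'' region, that is, $U(x,y)\in\{x,y\}$ whenever $x<e<y$. Setting $z=U(x,y)$ and using associativity together with symmetry and idempotency, one gets $U(x,z)=U(U(x,x),y)=U(x,y)=z$ and, likewise, $U(z,y)=z$; if $z\le e$, the first identity and monotonicity give $z=\min\{x,z\}\le x\le U(x,e)\le U(x,y)=z$, hence $z=x$, whereas if $z\ge e$, the second identity gives $z=\max\{z,y\}\ge y\ge U(e,y)\ge U(x,y)=z$, hence $z=y$.

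Next, for each $x\in\{1,\dots,e\}$, consider $S_x=\{y\in\{e,\dots,n\}\mid U(x,y)=x\}$. Monotonicity shows that $S_x$ is a down-set of $\{e,\dots,n\}$, and $U(x,e)=x$ shows that $e\in S_x$, so $S_x=\{e,\dots,g(x)\}$ for a well-defined $g(x)\ge e$. Moreover $g(e)=e$ since $U(e,y)=y$, and $g$ is nonincreasing: if $x<x'\le e$ then $U(x,g(x'))\le U(x',g(x'))=x'<g(x')$, forcing $U(x,g(x'))=x$ and hence $g(x')\in S_x$. It then remains to check that $U$ agrees with the displayed formula for this $g$. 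Using symmetry to assume $x\le y$, I would split into the cases $y\le e$, $x\ge e$, and $x<e<y$; the only non-immediate case is $x<e<y$, where $U(x,y)=\min\{x,y\}$ is equivalent to $y\in S_x$, that is, to $y\le g(x)=\overline g(x)$, the side condition $x\le\overline g(1)=g(1)$ holding automatically because $x<e\le g(1)$. Throughout, the key bookkeeping fact is the ``Galois'' equivalence $w\le g(z)\iff z\le\overline g(w)$, valid for $z\le e$ and $e\le w\le g(1)$, which follows at once from the definition of $\overline g$ and the monotonicity of $g$ (the set $\{z\le e\mid g(z)\ge w\}$ being a nonempty down-set).

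For the ``if'' direction, given a nonincreasing $g\colon\{1,\dots,e\}\to\{e,\dots,n\}$ with $g(e)=e$, let $F$ be defined by the displayed formula. Idempotency and conservativeness are immediate, since $F(x,y)$ is always $\min\{x,y\}$ or $\max\{x,y\}$. Symmetry reduces to checking that the predicate ``$y\le\overline g(x)$ and $x\le\overline g(1)$'' is symmetric in $x$ and $y$, which again follows from the Galois equivalence above together with the observation that $y\le\overline g(x)$ already forces $y\le\overline g(1)$. That $e$ is a neutral element is a direct computation: for $x\le e$ the pair $(x,e)$ lies in the $\min$-branch because $\overline g(x)=g(x)\ge e$, while for $x>e$ one of the two conditions fails (either $\overline g(x)<e$ or $x>g(1)$), so $(x,e)$ lies in the $\max$-branch. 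Monotonicity follows by a short case analysis; the only delicate situation, namely $F(x,y)$ in the $\max$-branch while $F(x,y')$ with $y\le y'$ is in the $\min$-branch, cannot occur, since $y\le y'\le\overline g(x)$ would place $(x,y)$ in the $\min$-branch as well. The remaining part, associativity $F(F(x,y),z)=F(x,F(y,z))$, is the most laborious: both sides lie in $\{x,y,z\}$ by conservativeness, and I expect the verification to reduce to a lengthy but routine case analysis organized by the positions of $x,y,z$ relative to $e$ (using that $F=\min$ on $\{1,\dots,e\}^2$ and $F=\max$ on $\{e,\dots,n\}^2$) and by which of the mixed pairs lie in the $\min$-branch. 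This case analysis is where I anticipate the main technical effort to lie; the remaining verifications are routine.
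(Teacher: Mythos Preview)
The paper does not prove this theorem; it is quoted from De~Baets, Fodor, Ruiz-Aguilera, and Torrens \cite{DeBFodRuiTor09} and used as a black box (the only piece of it the paper actually exploits later is the corollary that idempotent discrete uninorms are conservative; see Remark~\ref{rem:ccons}). So there is no in-paper proof to compare against, and your proposal amounts to supplying a self-contained argument for a result the authors simply cite.

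Your plan is sound. The ``only if'' direction is correct: the identities $U|_{\{1,\dots,e\}^2}=\min$ and $U|_{\{e,\dots,n\}^2}=\max$ are exactly Lemma~\ref{lemma:prel34}; your conservativeness argument on the mixed region (via $U(x,z)=z$ and $U(z,y)=z$) is the standard Czoga{\l}a--Drewniak trick alluded to in Remark~\ref{rem:ccons}; and the construction of $g$ via the down-sets $S_x$ together with the Galois correspondence $w\le g(z)\Leftrightarrow z\le\overline g(w)$ is the right bookkeeping device to match $U$ with the displayed formula.

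For the ``if'' direction your verifications of idempotency, symmetry, neutrality of $e$, and monotonicity are fine. The one place where you can save substantial effort is associativity: once you have established that $F$ is conservative, symmetric, nondecreasing, and has neutral element $e$, you do not need a fresh case analysis. The associativity half of the proof of Proposition~\ref{prop:main3} (which uses only these four properties together with Lemma~\ref{lemma:prel34}) applies verbatim and gives associativity immediately. This replaces your ``lengthy but routine'' step by a short argument already present in the paper.
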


\begin{remark}\label{rem:ccons}
The fact that any idempotent discrete uninorm is conservative can also be easily proved by following the first few steps of the proof of \cite[Theorem~3]{CzoDre84}, which actually hold on an arbitrary chain (i.e., totally ordered sets).
\end{remark}

We now show that the idempotent discrete uninorms are exactly those operations that are conservative, symmetric, and nondecreasing (see Theorem~\ref{thm:main}).

First consider the following lemma, which actually holds on arbitrary, not necessarily finite, chains.

\begin{lemma}\label{lemma:prel34}
If $F\colon L_n^2\to L_n$ is idempotent, nondecreasing, and has a neutral element $e\in L_n$, then $F|_{\{1,\ldots,e\}^2}=\min$ and $F|_{\{e,\ldots,n\}^2}=\max$.
\end{lemma}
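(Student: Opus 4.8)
The plan is to exploit the neutral element together with idempotency and monotonicity to pin down the values of $F$ on the two sub-chains $\{1,\ldots,e\}$ and $\{e,\ldots,n\}$ separately. By symmetry of the two cases (the argument for $\max$ on $\{e,\ldots,n\}$ is the order-dual of the argument for $\min$ on $\{1,\ldots,e\}$), it suffices to prove $F(x,y)=\min\{x,y\}$ for all $x,y\in\{1,\ldots,e\}$.

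\smallskip

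The key steps, in order, are as follows. First fix $x,y\in\{1,\ldots,e\}$ and assume without loss of generality that $x\leq y\leq e$. The goal is to show $F(x,y)=x$ (and, by symmetry of $F$, also $F(y,x)=x$; note the lemma does not assume $F$ is symmetric, so I should treat $F(x,y)$ and $F(y,x)$ by separate but parallel monotonicity arguments). For the lower bound, use that $x$ is at most $y$ and apply monotonicity to the pair $(x,1)\le (x,y)$ together with the neutral-element identity $F(x,1)$... wait, $1$ need not be below $e$ in a way that helps; instead use $F(x,e)=x$ and the idempotent value $F(x,x)=x$: since $x\le y\le e$, monotonicity gives $F(x,x)\le F(x,y)\le F(x,e)$, that is $x\le F(x,y)\le x$, forcing $F(x,y)=x$. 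The same sandwich with the first argument, $F(x,x)\le F(y,x)\le F(e,x)$, gives $F(y,x)=x$. This is really the whole argument: the neutral element supplies the upper fence $F(x,e)=x$, idempotency supplies the lower fence $F(x,x)=x$, and monotonicity squeezes everything in between.

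\smallskip

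For the dual half, fix $x,y\in\{e,\ldots,n\}$ with $e\le x\le y$. Then monotonicity applied to $e\le x\le y$ in each argument gives $F(y,e)\le F(y,x)\le F(y,y)$ and $F(e,y)\le F(x,y)\le F(y,y)$, i.e. $y\le F(x,y)\le y$ and $y\le F(y,x)\le y$ using $F(y,e)=F(e,y)=y$ and $F(y,y)=y$; hence $F(x,y)=F(y,x)=y=\max\{x,y\}$. Combining the two halves yields $F|_{\{1,\ldots,e\}^2}=\min$ and $F|_{\{e,\ldots,n\}^2}=\max$, as claimed.

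\smallskip

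I do not expect a genuine obstacle here; the only thing to be careful about is not to invoke symmetry of $F$ (which is not hypothesized), so the bounds must be established for both orders of the arguments, and to make sure the chain being finite is never actually used — indeed the sandwich argument works verbatim on any chain, which is exactly why the lemma is stated for arbitrary chains.
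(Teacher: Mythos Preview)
Your argument is correct and is essentially identical to the paper's own proof: for $x\leq y\leq e$ the paper sandwiches $x=F(x,x)\leq F(x,y)\leq F(x,e)=x$ and $x=F(x,x)\leq F(y,x)\leq F(e,x)=x$, then remarks that the $\max$ identity on $\{e,\ldots,n\}$ follows dually. Aside from the exploratory aside about $F(x,1)$ (which you rightly abandon), your write-up matches the paper's reasoning line for line, including the care not to assume symmetry.
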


\begin{proof}
For any $x,y\in\{1,\ldots,e\}$ such that $x\leq y$, we have $x=F(x,x)\leq F(x,y)\leq F(x,e)=x$ and $x=F(x,x)\leq F(y,x)\leq F(e,x)=x$. This shows that $F|_{\{1,\ldots,e\}^2}=\min$. The other identity can be proved similarly.
\end{proof}

\begin{proposition}\label{prop:main3}
If $F\colon L_n^2\to L_n$ is conservative, symmetric, and nondecreasing, then it is associative and it has a neutral element.
\end{proposition}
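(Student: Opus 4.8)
The plan is to first extract a neutral element from the structural information available, then leverage Lemma~\ref{lemma:prel34} together with conservativeness to pin down the behaviour of $F$ on all of $L_n^2$, and finally verify associativity by a case analysis.

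First I would locate a neutral element. By Proposition~\ref{prop:ee}, since $F$ is conservative it suffices to exhibit an isolated point on $\Delta_{L_n}$. Because $F$ is nondecreasing, the function $x\mapsto F(x,1)$ is nondecreasing with values in $\{1,\ldots\}$, and conservativeness forces $F(x,1)\in\{x,1\}$; a symmetric remark applies to $F(x,n)$. The idea is that among the diagonal points, nondecreasingness of the two ``corner'' restrictions $x \mapsto F(x,1)$ and $x \mapsto F(x,n)$ constrains where switches between $\min$-like and $\max$-like behaviour can happen. Concretely, let $e$ be the largest element of $L_n$ with $F(e,1)=F(1,e)=1$ (such an $e$ exists since $F(1,1)=1$); I would argue using monotonicity and conservativeness that then $F(x,1)=F(1,x)=x$ for all $x\le e$ and, dually, $F(x,n)=F(n,x)=x$ for all $x\ge e$, from which one deduces that $(e,e)$ is isolated and hence, by Proposition~\ref{prop:ee}, that $e$ is the neutral element. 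Establishing this is, I expect, the main obstacle: one must carefully combine the three hypotheses to show that the ``crossover'' on the first row/column and the one on the last row/column occur at the same index $e$.

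Once a neutral element $e$ is in hand, Lemma~\ref{lemma:prel34} (applicable since conservativeness implies idempotency, by Remark~\ref{rem:ConsJ}) gives $F|_{\{1,\ldots,e\}^2}=\min$ and $F|_{\{e,\ldots,n\}^2}=\max$. It remains to understand $F$ on the two ``mixed'' rectangles $\{1,\ldots,e\}\times\{e,\ldots,n\}$ and its transpose; by symmetry it is enough to treat one of them. There, conservativeness says $F(x,y)\in\{x,y\}$ with $x\le e\le y$, and nondecreasingness together with the already-known values on the boundary rows and columns (where $F$ equals $\min=x$ on the column $y=e$ and $\max=y$ on the row $x=e$, etc.) restricts the possible ``patterns'': for each fixed argument the set of points where $F$ picks the smaller input must be a down-set, giving a staircase-shaped region. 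This recovers exactly the structure of Theorem~\ref{thm:Deb1}, though for the present proposition I would not need the explicit formula — only that $F$ is determined on each rectangle by a monotone threshold.

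Finally I would check associativity, $F(F(x,y),z)=F(x,F(y,z))$, by cases according to the relative positions of $x,y,z$ with respect to $e$. In the regions where $F$ coincides with $\min$ or with $\max$ throughout, associativity is immediate since $\min$ and $\max$ are associative. The remaining cases mix the two operations; here one uses that on the mixed rectangles $F$ still returns one of its two arguments and that it agrees with $\min$ whenever both arguments are $\le e$ and with $\max$ whenever both are $\ge e$, so that any nested evaluation collapses to comparing the three values $x,y,z$ via a consistent rule. A short exhaustive check (up to the symmetry of $F$ and to swapping the roles of the outer arguments) over the cases $x,y,z$ all $\le e$; all $\ge e$; and the mixed configurations then yields the identity. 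I expect this final verification to be routine once the structure from the previous step is available, the genuine work being concentrated in producing the neutral element.
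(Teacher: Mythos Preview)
Your treatment of associativity, once a neutral element $e$ is in hand, is essentially the paper's: invoke Lemma~\ref{lemma:prel34} to get $\min$ on $\{1,\dots,e\}^2$ and $\max$ on $\{e,\dots,n\}^2$, then run a short case analysis on the position of $x,y,z$ relative to $e$ using conservativeness and monotonicity. That part is fine and matches the paper closely.

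The genuine gap is in producing the neutral element. With your $e$ defined as the largest index satisfying $F(e,1)=1$, monotonicity gives $F(x,1)=1$ for all $x\le e$, not $F(x,1)=x$; your stated conclusion already fails at $x=e$ whenever $e>1$. More importantly, this $e$ is in general not the neutral element, and the two ``crossovers'' you propose to match up do not coincide. Take $n=3$ and the idempotent discrete uninorm with neutral element $2$ determined by $F(1,3)=1$ (the third operation in Figure~\ref{fig:6}): then $F(x,1)=1$ for every $x\in L_3$, so your $e$ equals $3$; on the top row $F(1,3)=1$ while $F(2,3)=F(3,3)=3$, so the crossover there sits at $2$. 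Neither crossover alone locates the neutral element, and they do not agree, so the ``main obstacle'' you flag is not merely hard but false as formulated. Invoking Proposition~\ref{prop:ee} does not rescue the argument either, since showing that $(e,e)$ is isolated is precisely the statement that $e$ is neutral.

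The paper avoids this by induction on $n$: since $F(1,n)\in\{1,n\}$, one of the endpoints is absorbing (say $F(1,x)=1$ for all $x$); restrict $F$ to $L'=L_n\setminus\{1\}$, obtain a neutral element for $F|_{L'^2}$ from the induction hypothesis, and use Proposition~\ref{prop:ee} to check that it remains neutral for $F$. If you want a direct argument, you are effectively forced to peel off absorbing endpoints one at a time, which is this induction in disguise.
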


\begin{proof}
Let us first prove that $F$ has a neutral element. We proceed by induction on the size $n$ of the chain. There is nothing to prove if $n=1$. We can easily see by inspection that there are only two possible operations if $n=2$ and four possible operations if $n=3$. The contour plot of these operations are given in Figures~\ref{fig:5} and \ref{fig:6}, respectively. Now suppose that the result holds for any $(n-1)$-element chain and consider an operation $F\colon L_n^2\to L_n$ that is conservative, symmetric, and nondecreasing. By conservativeness and symmetry we then have $F(1,n)=F(n,1)\in\{1,n\}$. We may suppose that $F(1,n)=F(n,1)=1$; the other case can be dealt with dually. By nondecreasing monotonicity, we also have $F(1,x)=F(x,1)=1$ for all $x\in L_n$. Consider the subchain $L'=L_n\setminus\{1\}$. Clearly, the operation $F'=F|_{L'}$ is conservative, symmetric, and nondecreasing. By the induction hypothesis, $F'$ has a neutral element $e\in L'$. Let us show that $e$ is also a neutral element of $F$. Suppose that this is not true. Then, by Proposition~\ref{prop:ee} the point $(e,e)$ is isolated for $F'$ but not for $F$. This means that there exists $x\in L_n$ such that $1=F(1,x)=F(x,1)=F(e,e)=e$, which contradicts the fact that $e\in L'$.

Now, let $F\colon L_n^2\to L_n$ be an operation that is conservative, symmetric, and nondecreasing. We just showed that $F$ must have a neutral element $e\in L_n$. To see that $F$ is associative, let $x,y,z\in L_n$ be arbitrary and let us show that the identity $F(F(x,y),z)=F(x,F(y,z))$ holds. Assume that $x\leq y\leq z$ (the other five permutations can be treated similarly). We have three cases to examine:
\begin{itemize}
\item Suppose $x\leq y\leq z\leq e$ or $e\leq x\leq y\leq z$. Then the result immediately follows from Lemma~\ref{lemma:prel34}.
\item Suppose $x\leq y\leq e\leq z$. By Lemma~\ref{lemma:prel34}, we have $F(x,y)=\min\{x,y\}=x$.
    \begin{itemize}
    \item If $F(x,z)=x$, then $F(F(x,y),z)=F(x,z)=x$ and by conservativeness we also have
    $$
    F(x,F(y,z)) ~\in ~ \{F(x,y),F(x,z)\} ~=~ \{x\}.
    $$
    \item If $F(x,z)=z$, then by conservativeness and nondecreasing monotonicity we have $F(y,z)=z$ and hence $F(F(x,y),z)=F(x,z)=F(x,F(y,z))$.
    \end{itemize}
\item Suppose that $x\leq e\leq y\leq z$.  By Lemma~\ref{lemma:prel34}, we have $F(y,z)=\max\{y,z\}=z$.
    \begin{itemize}
    \item If $F(x,y)=x$, then we have $F(F(x,y),z)=F(x,z)=F(x,F(y,z))$.
    \item If $F(x,y)=y$, then by conservativeness and nondecreasing monotonicity we have $F(x,z)=z$ and hence $F(F(x,y),z)=F(y,z)=z=F(x,z)=F(x,F(y,z))$.
    \end{itemize}
\end{itemize}
This completes the proof of the proposition.
\end{proof}

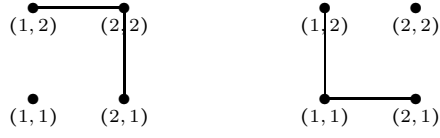
\begin{figure}[tbp]
\begin{center}
\begin{scriptsize}
\begin{picture}(2,2)
\multiput(0.5,0.5)(0,1){2}{\multiput(0,0)(1,0){2}{\circle*{0.12}}}%
\put(0.5,0.4){\makebox(0,0)[t]{$(1,1)$}}
\put(1.5,0.4){\makebox(0,0)[t]{$(2,1)$}}
\put(0.5,1.4){\makebox(0,0)[t]{$(1,2)$}}
\put(1.5,1.4){\makebox(0,0)[t]{$(2,2)$}}
\drawline[1](0.5,1.5)(1.5,1.5)(1.5,0.5)
\end{picture}
\hspace{0.10\textwidth}
\begin{picture}(2,2)
\multiput(0.5,0.5)(0,1){2}{\multiput(0,0)(1,0){2}{\circle*{0.12}}}%
\put(0.5,0.4){\makebox(0,0)[t]{$(1,1)$}}
\put(1.5,0.4){\makebox(0,0)[t]{$(2,1)$}}
\put(0.5,1.4){\makebox(0,0)[t]{$(1,2)$}}
\put(1.5,1.4){\makebox(0,0)[t]{$(2,2)$}}
\drawline[1](0.5,1.5)(0.5,0.5)(1.5,0.5)
\end{picture}
\end{scriptsize}
\caption{Possible operations when $n=2$}
\label{fig:5}
\end{center}
\end{figure}

\begin{figure}[tbp]
\begin{center}
\begin{scriptsize}
\begin{picture}(3,3)
\multiput(0.5,0.5)(0,1){3}{\multiput(0,0)(1,0){3}{\circle*{0.12}}}%
\put(0.5,0.4){\makebox(0,0)[t]{$(1,1)$}}
\put(1.5,0.4){\makebox(0,0)[t]{$(2,1)$}}
\put(2.5,0.4){\makebox(0,0)[t]{$(3,1)$}}
\put(0.5,1.4){\makebox(0,0)[t]{$(1,2)$}}
\put(1.5,1.4){\makebox(0,0)[t]{$(2,2)$}}
\put(2.5,1.4){\makebox(0,0)[t]{$(3,2)$}}
\put(0.5,2.4){\makebox(0,0)[t]{$(1,3)$}}
\put(1.5,2.4){\makebox(0,0)[t]{$(2,3)$}}
\put(2.5,2.4){\makebox(0,0)[t]{$(3,3)$}}
\drawline[1](0.5,2.5)(2.5,2.5)(2.5,0.5)
\drawline[1](0.5,1.5)(1.5,1.5)(1.5,0.5)
\end{picture}
\hspace{0.10\textwidth}
\begin{picture}(3,3)
\multiput(0.5,0.5)(0,1){3}{\multiput(0,0)(1,0){3}{\circle*{0.12}}}%
\put(0.5,0.4){\makebox(0,0)[t]{$(1,1)$}}
\put(1.5,0.4){\makebox(0,0)[t]{$(2,1)$}}
\put(2.5,0.4){\makebox(0,0)[t]{$(3,1)$}}
\put(0.5,1.4){\makebox(0,0)[t]{$(1,2)$}}
\put(1.5,1.4){\makebox(0,0)[t]{$(2,2)$}}
\put(2.5,1.4){\makebox(0,0)[t]{$(3,2)$}}
\put(0.5,2.4){\makebox(0,0)[t]{$(1,3)$}}
\put(1.5,2.4){\makebox(0,0)[t]{$(2,3)$}}
\put(2.5,2.4){\makebox(0,0)[t]{$(3,3)$}}
\drawline[1](0.5,2.5)(2.5,2.5)(2.5,0.5)
\drawline[1](0.5,1.5)(0.5,0.5)(1.5,0.5)
\end{picture}
\par\bigskip
\begin{picture}(3,3)
\multiput(0.5,0.5)(0,1){3}{\multiput(0,0)(1,0){3}{\circle*{0.12}}}%
\put(0.5,0.4){\makebox(0,0)[t]{$(1,1)$}}
\put(1.5,0.4){\makebox(0,0)[t]{$(2,1)$}}
\put(2.5,0.4){\makebox(0,0)[t]{$(3,1)$}}
\put(0.5,1.4){\makebox(0,0)[t]{$(1,2)$}}
\put(1.5,1.4){\makebox(0,0)[t]{$(2,2)$}}
\put(2.5,1.4){\makebox(0,0)[t]{$(3,2)$}}
\put(0.5,2.4){\makebox(0,0)[t]{$(1,3)$}}
\put(1.5,2.4){\makebox(0,0)[t]{$(2,3)$}}
\put(2.5,2.4){\makebox(0,0)[t]{$(3,3)$}}
\drawline[1](0.5,2.5)(0.5,0.5)(2.5,0.5)
\drawline[1](1.5,2.5)(2.5,2.5)(2.5,1.5)
\end{picture}
\hspace{0.10\textwidth}
\begin{picture}(3,3)
\multiput(0.5,0.5)(0,1){3}{\multiput(0,0)(1,0){3}{\circle*{0.12}}}%
\put(0.5,0.4){\makebox(0,0)[t]{$(1,1)$}}
\put(1.5,0.4){\makebox(0,0)[t]{$(2,1)$}}
\put(2.5,0.4){\makebox(0,0)[t]{$(3,1)$}}
\put(0.5,1.4){\makebox(0,0)[t]{$(1,2)$}}
\put(1.5,1.4){\makebox(0,0)[t]{$(2,2)$}}
\put(2.5,1.4){\makebox(0,0)[t]{$(3,2)$}}
\put(0.5,2.4){\makebox(0,0)[t]{$(1,3)$}}
\put(1.5,2.4){\makebox(0,0)[t]{$(2,3)$}}
\put(2.5,2.4){\makebox(0,0)[t]{$(3,3)$}}
\drawline[1](0.5,2.5)(0.5,0.5)(2.5,0.5)
\drawline[1](1.5,2.5)(1.5,1.5)(2.5,1.5)
\end{picture}
\end{scriptsize}
\caption{Possible operations when $n=3$}
\label{fig:6}
\end{center}
\end{figure}
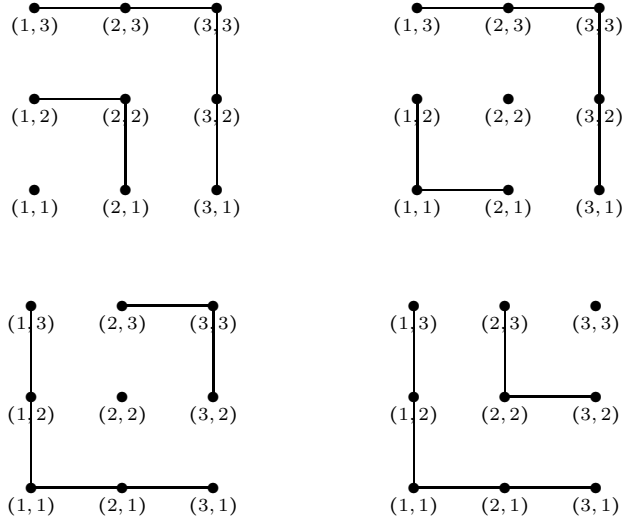

\begin{remark}
\begin{enumerate}
\item[(a)] The existence of a neutral element in Proposition~\ref{prop:main3} is no longer guaranteed if the chain is not finite. For instance, the real operation $F\colon [0,1]^2\to [0,1]$ defined by $F(x,y)=\max\{x,y\}$, if $x,y>0$, and $F(x,y)=0$, otherwise, is conservative, symmetric, and nondecreasing, but it does not have a neutral element.
\item[(b)] Associativity of $F$ in Proposition~\ref{prop:main3} can be established on any chain. This result was proved in the special case where the chain is the real unit interval $[0,1]$ in \cite[Proposition~2]{MarMayTor03} as a consequence of a sequence of three lemmas. Here we have provided a simpler proof based on the existence of a neutral element.
\item[(c)] We observe that conservativeness cannot be relaxed into idempotency in Proposition~\ref{prop:main3}. For instance the operation $F\colon L_3^2\to L_3$ whose contour plot is depicted in Figure~\ref{fig:2} is idempotent, symmetric, and nondecreasing, but one can show that it is not associative and it has no neutral element.
\item[(d)] We also observe that each of the conditions of Proposition~\ref{prop:main3} is necessary. Indeed, we give in Figure~\ref{fig:7} an operation that is conservative and symme{\-}tric but that is not nondecreasing. We also give in Figure~\ref{fig:8} an operation that is conservative and nondecreasing but not symmetric. Finally, we give in Figure~\ref{fig:9} an operation that is symmetric and nondecreasing but not conservative. None of these three operations is associative and none has a neutral element.
\end{enumerate}
\end{remark}

\begin{figure}[tbp]
\begin{center}
\begin{small}
\begin{picture}(3,3)
\multiput(0.5,0.5)(0,1){3}{\multiput(0,0)(1,0){3}{\circle*{0.12}}}%
\drawline[1](2.5,1.5)(2.5,2.5)(1.5,2.5)
\drawline[1](0.5,1.5)(1.5,1.5)(1.5,0.5)
\put(1.5,0.5){\oval(2,0.5)[b]}\put(0.5,1.5){\oval(0.5,2)[l]}
\put(0.7,0.5){\makebox(0,0)[l]{$1$}}
\put(1.7,1.5){\makebox(0,0)[l]{$2$}}
\put(2.7,2.5){\makebox(0,0)[l]{$3$}}
\end{picture}
\end{small}
\caption{An operation that is not nondecreasing}
\label{fig:7}
\end{center}
\end{figure}
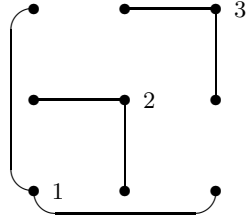

\begin{figure}[tbp]
\begin{minipage}{0.49\textwidth}
\begin{center}
\begin{small}
\begin{picture}(3,3)
\multiput(0.5,0.5)(0,1){3}{\multiput(0,0)(1,0){3}{\circle*{0.12}}}%
\drawline[1](0.5,0.5)(0.5,2.5)
\drawline[1](1.5,0.5)(1.5,1.5)
\drawline[1](1.5,2.5)(2.5,2.5)(2.5,0.5)
\put(0.7,0.5){\makebox(0,0)[l]{$1$}}
\put(1.7,1.5){\makebox(0,0)[l]{$2$}}
\put(2.7,2.5){\makebox(0,0)[l]{$3$}}
\end{picture}
\end{small}
\caption{An operation that is not symmetric}
\label{fig:8}
\end{center}
\end{minipage}
\hfill
\begin{minipage}{0.49\textwidth}
\begin{center}
\begin{small}
\begin{picture}(3,3)
\multiput(0.5,0.5)(0,1){3}{\multiput(0,0)(1,0){3}{\circle*{0.12}}}%
\drawline[1](0.5,2.5)(0.5,0.5)(2.5,0.5)
\drawline[1](0.5,1.5)(1.5,1.5)(1.5,0.5)
\drawline[1](1.5,2.5)(2.5,2.5)(2.5,1.5)
\put(1.7,1.5){\makebox(0,0)[l]{$1$}}
\put(2.7,2.5){\makebox(0,0)[l]{$3$}}
\end{picture}
\end{small}
\caption{An operation that is not conservative}
\label{fig:9}
\end{center}
\end{minipage}
\end{figure}

\begin{theorem}\label{thm:main}
An operation $F\colon L_n^2\to L_n$ is conservative, symmetric, and nondecreasing if and only if it is an idempotent discrete uninorm.
\end{theorem}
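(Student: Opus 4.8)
The plan is to prove the two implications separately, with nearly all the real work already packaged in the preceding results. For the direction ``conservative, symmetric, and nondecreasing $\Rightarrow$ idempotent discrete uninorm,'' I would simply observe that conservativeness implies idempotency (Remark~\ref{rem:ConsJ}), and that Proposition~\ref{prop:main3} supplies both associativity and the existence of a neutral element. Together with the hypotheses of symmetry and nondecreasing monotonicity, this is exactly the definition of a discrete uninorm, and it is idempotent; so this direction is immediate.

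For the converse, ``idempotent discrete uninorm $\Rightarrow$ conservative, symmetric, and nondecreasing,'' symmetry and nondecreasing monotonicity hold by definition of a uninorm, so the only nontrivial point is conservativeness. Here I would invoke Theorem~\ref{thm:Deb1} (the De Baets et al.\ characterization): every idempotent discrete uninorm is given by the stated $\min/\max$ formula, and such an operation visibly always returns one of its two arguments, hence is conservative. Alternatively, as noted in Remark~\ref{rem:ccons}, one could cite the first steps of the proof of \cite[Theorem~3]{CzoDre84}. Either way, conservativeness comes for free from an already-established result.

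Assembling these, the theorem follows by chaining the equivalences. The only place where something could be called an ``obstacle'' is ensuring that Proposition~\ref{prop:main3} really does all the heavy lifting in the forward direction --- but since that proposition has already been proved in the excerpt, the present statement is essentially a repackaging, and the proof should be just a few lines citing Remark~\ref{rem:ConsJ}, Proposition~\ref{prop:main3}, and Theorem~\ref{thm:Deb1}.
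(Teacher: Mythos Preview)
Your proposal is correct and matches the paper's own proof essentially line for line: the forward direction cites Remark~\ref{rem:ConsJ} and Proposition~\ref{prop:main3}, and the converse appeals to the definition together with Theorem~\ref{thm:Deb1} (with Remark~\ref{rem:ccons} noted as an alternative). There is nothing to add.
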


\begin{proof}
(Necessity) The result immediately follows from Proposition~\ref{prop:main3} and the fact that any conservative operation is idempotent (see Remark~\ref{rem:ConsJ}).

(Sufficiency) By definition, any idempotent discrete uninorm is symmetric and nondecreasing. It is also conservative by Theorem~\ref{thm:Deb1} (for an alternative proof see Remark~\ref{rem:ccons}).
\end{proof}

\begin{corollary}
Let $F\colon L_n^2\to L_n$ be an operation that is associative, symmetric, nondecreasing, and has a neutral element. Then $F$ is idempotent if and only if it is conservative.
\end{corollary}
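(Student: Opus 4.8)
The plan is to treat the two implications separately, and to observe that both are immediate consequences of material already in place. For the ``if'' direction, suppose $F$ is conservative. Then $F(x,x)\in\{x,x\}=\{x\}$ for every $x\in L_n$, so $F$ is idempotent; this is exactly the observation recorded in Remark~\ref{rem:ConsJ} that every conservative operation is idempotent, and it uses none of the other hypotheses.

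For the ``only if'' direction, suppose $F$ is idempotent. Since $F$ is also assumed associative, symmetric, nondecreasing, and equipped with a neutral element, $F$ is by definition an idempotent discrete uninorm. I would then invoke Theorem~\ref{thm:main} (sufficiency direction), which asserts that every idempotent discrete uninorm is conservative; alternatively one could cite Theorem~\ref{thm:Deb1} directly, or the argument sketched in Remark~\ref{rem:ccons}. Either way, conservativeness of $F$ follows at once.

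I do not expect any genuine obstacle here: the statement is a straightforward corollary whose substantive content has already been absorbed into Theorem~\ref{thm:main}. The only point worth stressing in the writeup is that, once one direction is the trivial implication ``conservative $\Rightarrow$ idempotent,'' the reverse implication is precisely the non-obvious half of Theorem~\ref{thm:main} specialized to operations already known to be discrete uninorms; so the corollary really just repackages ``idempotent discrete uninorms are conservative'' as an equivalence within the class of discrete uninorms. Hence the proof should be two short sentences, one per implication, each with the appropriate pointer.
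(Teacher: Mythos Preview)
Your proposal is correct and matches the paper's intended reasoning: the paper states this corollary without proof immediately after Theorem~\ref{thm:main}, since one direction is the trivial implication conservative $\Rightarrow$ idempotent (Remark~\ref{rem:ConsJ}) and the other is exactly the sufficiency part of Theorem~\ref{thm:main} (equivalently Theorem~\ref{thm:Deb1} or Remark~\ref{rem:ccons}). There is nothing to add.
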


%

The following result gives the exact number of idempotent discrete uninorms on $L_n$. A first proof that essentially relies on Theorem~\ref{thm:Deb1} can be found in \cite[Theorem~4]{DeBFodRuiTor09}. An alternative proof is also available in \cite[Remark~1]{Mes16}. Here we provide a new proof based on Theorem~\ref{thm:main} only.

\begin{theorem}\label{thm:main2n}
There are exactly $2^{n-1}$ idempotent discrete uninorms on $L_n$.
\end{theorem}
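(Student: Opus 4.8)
The plan is to set up a bijection between the idempotent discrete uninorms on $L_n$ and the binary strings of length $n-1$, counting the latter set directly. By Theorem~\ref{thm:main}, it is equivalent to count the operations $F\colon L_n^2\to L_n$ that are conservative, symmetric, and nondecreasing. First I would encode such an $F$ by the string $s(F)=(s_1,\ldots,s_{n-1})\in\{0,1\}^{n-1}$ defined by $s_k=0$ if $F(k,k+1)=k$ and $s_k=1$ if $F(k,k+1)=k+1$; this is well defined since $F$ is conservative and symmetric, so $F(k,k+1)=F(k+1,k)\in\{k,k+1\}$. The core of the argument is to show that $s$ is a bijection, i.e., that the values of a conservative, symmetric, nondecreasing $F$ on the ``superdiagonal'' pairs $(k,k+1)$ already determine $F$ on all of $L_n^2$, and that every string is realized.

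For injectivity, I would argue that $F(x,y)$ is forced for every $x<y$. Indeed, take $x<y$ and look at the chain of inequalities coming from nondecreasing monotonicity: $F(x,x+1)\le F(x,y)$ and $F(x,y)\le F(x+1,y)$, etc.; combined with conservativeness (so $F(x,y)\in\{x,y\}$) and idempotency ($F(x,x)=x$, $F(y,y)=y$), one deduces that $F(x,y)=\min\{x,y\}$ whenever there is some index $k$ with $x\le k<k+1\le y$ and $s_k=0$ lying ``the right way'', and $F(x,y)=\max\{x,y\}$ otherwise. More cleanly: using Proposition~\ref{prop:main3} one knows $F$ has a neutral element $e$, and Lemma~\ref{lemma:prel34} pins down $F$ on $\{1,\ldots,e\}^2$ and on $\{e,\ldots,n\}^2$; it then remains to determine $F(x,y)$ for $x<e<y$, which one does by induction using the values $s_k$ together with the monotonicity relations $F(x,y)\le F(x,y')$ and $F(x,y)\le F(x',y)$ and the fact that $F(x,y)\in\{x,y\}$. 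In any case, once all values $s_k$ are known, each $F(x,y)$ with $x<y$ is uniquely determined, and symmetry gives the rest; hence $s$ is injective.

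For surjectivity, given an arbitrary string $(s_1,\ldots,s_{n-1})$, I would explicitly define a candidate $F$ — for instance, for $x<y$ set $F(x,y)=y$ if $s_k=1$ for all $k$ with $x\le k<y$, but this is too crude; the correct rule is dictated by the forcing analysis above, roughly $F(x,y)=\max\{x,y\}$ if $s_k=1$ for every $k\in\{x,\ldots,y-1\}$ that also satisfies a suitable condition relative to the position of the neutral element, and $\min\{x,y\}$ otherwise. Rather than guess the formula, the cleanest route is: define $F(x,y)$ for $x<y$ by scanning and showing the relation is consistent, or equivalently invoke De~Baets et al.'s parametrization (Theorem~\ref{thm:Deb1}) — but since the point of this proof is to rely on Theorem~\ref{thm:main} only, I would instead verify directly that the operation built from the string by the forced rule is conservative, symmetric, and nondecreasing, checking monotonicity on adjacent pairs only (which suffices). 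This yields $|s^{-1}(\text{string})|\ge 1$ for every string, so $s$ is onto. Combining with injectivity, the number of such operations equals $|\{0,1\}^{n-1}|=2^{n-1}$, and by Theorem~\ref{thm:main} this is the number of idempotent discrete uninorms on $L_n$.

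The main obstacle is the surjectivity half, specifically producing, for each string, an operation that is provably nondecreasing: one must show that the values forced on the superdiagonal never conflict with monotonicity across longer jumps. I expect this to reduce to a short monotonicity check between consecutive ``blocks'' of $0$'s and $1$'s in the string, i.e., verifying $F(x-1,y)\le F(x,y)$ and $F(x,y)\le F(x,y+1)$ at each step of the inductive construction, using that $F(x,y)\in\{x,y\}$ and that the $\min$/$\max$ regions are, respectively, a down-set and an up-set in an appropriate sense. It may be simplest to phrase the construction so that the region $\{(x,y):F(x,y)=\min\{x,y\}\}$ is exactly a ``staircase'' down-set determined by the string, which makes nondecreasing monotonicity transparent.
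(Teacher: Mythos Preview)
Your encoding via the superdiagonal values $s_k=F(k,k+1)$ is not a bijection: it is neither injective nor surjective, already for $n=3$. Consider the two idempotent discrete uninorms on $L_3$ with neutral element $e=2$ (the second and third operations in Figure~\ref{fig:6}): one has $F(1,3)=3$, the other has $F(1,3)=1$. Both satisfy $F(1,2)=1$ and $F(2,3)=3$, hence both map to the string $(0,1)$. Your injectivity argument breaks precisely here: from $F(1,2)\le F(1,3)\le F(2,3)$ and $F(1,3)\in\{1,3\}$ you obtain only $1\le F(1,3)\le 3$, which does not pin down the value, and the ``cleaner'' route via Lemma~\ref{lemma:prel34} leaves exactly the off-block value $F(1,3)$ undetermined for the same reason. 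Conversely, the string $(1,0)$ is never realized: $F(1,2)=2$ and $F(2,3)=2$ would force $F(2,x)=2$ for all $x$, and one checks directly (using Lemma~\ref{lemma:prel34}) that no choice of neutral element is compatible with this, contradicting Proposition~\ref{prop:main3}. Thus $s$ sends the four uninorms on $L_3$ onto the three strings $(0,0),(0,1),(1,1)$, with $(0,1)$ hit twice.

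The paper's proof uses a different and simpler inductive encoding: record the single bit $F(1,n)\in\{1,n\}$ and observe that if $F(1,n)=1$ then monotonicity forces $F(1,x)=F(x,1)=1$ for all $x$, so $F$ is completely determined by its restriction to $(L_n\setminus\{1\})^2$; dually if $F(1,n)=n$. This peeling yields the recursion $c_n=2c_{n-1}$ with $c_1=1$. If you want to salvage a direct bijection with $\{0,1\}^{n-1}$, the correct string is the sequence of these corner bits as you successively peel off an endpoint, not the superdiagonal values; equivalently (cf.\ Theorems~\ref{thm:GC} and~\ref{thm:QoB}) it is the data of a single-peaked linear ordering on $L_n$.
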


\begin{proof}
By Theorem~\ref{thm:main} it is enough to count the number $c_n$ of operations on $L_n$ that are conservative, symmetric, and nondecreasing. As already observed in the proof of Proposition~\ref{prop:main3} we can see by inspection that $c_1=1$, $c_2=2$, and $c_3=4$ (see Figures~\ref{fig:5} and \ref{fig:6}). Suppose now that $c_{n-1}=2^{n-2}$ for some integer $n\geq 3$ and let us prove that $c_n=2^{n-1}$. Let $F\colon L_n^2\to L_n$ be an arbitrary idempotent discrete uninorm. By conservativeness and symmetry we then have $F(1,n)=F(n,1)\in\{1,n\}$. Suppose first that $F(1,n)=F(n,1)=1$. By nondecreasing monotonicity, we also have $F(1,x)=F(x,1)=1$ for all $x\in L_n$. Consider the subchain $L'=L_n\setminus\{1\}$. Clearly, the operation $F'=F|_{L'}$ is conservative, symmetric, and nondecreasing and there are $c_{n-1}=2^{n-2}$ possible such operations (see Figure~\ref{fig:10}, on left). We arrive at the same conclusion if $F(1,n)=F(n,1)=n$ (see Figure~\ref{fig:10}, on right). In total, we then have $c_n=2^{n-2}+2^{n-2}=2^{n-1}$.
\end{proof}

\begin{figure}[tbp]
\begin{center}
\begin{picture}(3,3)
\put(0.5,0.5){\circle*{0.12}}\put(0.5,1.0){\circle*{0.12}}\put(0.5,2.5){\circle*{0.12}}
\put(1.0,0.5){\circle*{0.12}}\put(2.5,0.5){\circle*{0.12}}
\drawline[1](0.5,2.5)(0.5,0.5)(2.5,0.5)
\put(1,1){\dashbox{0.06}(1.5,1.5){$F|_{L'}$}}
\end{picture}
\hspace{0.10\textwidth}
\begin{picture}(3,3)
\put(2.5,2.5){\circle*{0.12}}\put(2.0,2.5){\circle*{0.12}}\put(0.5,2.5){\circle*{0.12}}
\put(2.5,2.0){\circle*{0.12}}\put(2.5,0.5){\circle*{0.12}}
\drawline[1](0.5,2.5)(2.5,2.5)(2.5,0.5)
\put(0.5,0.5){\dashbox{0.06}(1.5,1.5){$F|_{L'}$}}
\end{picture}
\caption{Illustration of the proof of Theorem~\ref{thm:main2n}}
\label{fig:10}
\end{center}
\end{figure}
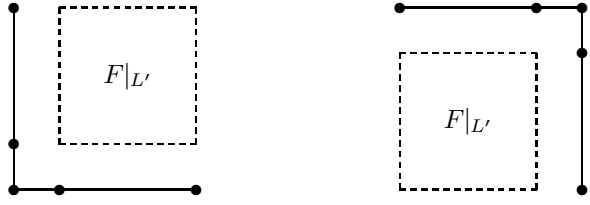

Theorems~\ref{thm:main} and \ref{thm:main2n} together enable us to provide the following graphical characterization of the idempotent discrete uninorms in terms of their contour plots.

\begin{theorem}\label{thm:GC}
The following algorithm outputs the contour plot of an arbitrary idempotent discrete uninorm $F\colon L_n^2\to L_n$.
\begin{enumerate}
  \item[Step 1.] Choose the neutral element $a_1\in L_n$ and set $C_1=\{a_1\}$. The point $(a_1,a_1)$ is necessarily isolated with value $a_1$
  \item[Step 2.] For $k=2,\ldots,n$
  \begin{enumerate}
  \item[1.] Pick a closest element $a_k$ to $C_{k-1}$ in $L_n\setminus C_{k-1}$
  \item[2.] Set $C_k=\{a_k\}\cup C_{k-1}$
  \item[3.] Connect all the points in $C_k^2\setminus C_{k-1}^2$ with common value $a_k$
  \end{enumerate}
\end{enumerate}
\end{theorem}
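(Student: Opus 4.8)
The plan is to show that the algorithm is both well-defined (the resulting object really is a contour plot of a function on $L_n^2$) and exhaustive (every idempotent discrete uninorm arises from some sequence of choices, and distinct choice-sequences give distinct uninorms, matching the count $2^{n-1}$ from Theorem~\ref{thm:main2n}). First I would analyze the structure of the chains $C_k$: since at each step we adjoin an element \emph{closest} to $C_{k-1}$, and $C_1=\{a_1\}$ is a single point, an easy induction shows that each $C_k$ is an \emph{interval} of $L_n$ containing $a_1$, and the element $a_k$ is always one of the (at most two) endpoints adjacent to $C_{k-1}$. Thus at step $k\geq 2$ there are exactly two choices for $a_k$ unless $C_{k-1}$ already has $1$ or $n$ as an endpoint, in which case there is only one; counting the branchings gives exactly $2^{n-1}$ possible runs of the algorithm, which is promising.

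Next I would verify that ``Connect all points in $C_k^2\setminus C_{k-1}^2$ with common value $a_k$'' is consistent, i.e. that it defines a genuine function $F$ by the end of the loop. Since the sets $C_k^2\setminus C_{k-1}^2$ (for $k=2,\dots,n$) together with the singleton $\{(a_1,a_1)\}$ partition $L_n^2$, each point of $L_n^2$ receives exactly one value, so $F$ is well-defined. I would then check that this $F$ is conservative, symmetric, and nondecreasing. Symmetry is immediate because $C_k^2\setminus C_{k-1}^2$ is symmetric about the diagonal. For conservativeness: if $(x,y)\in C_k^2\setminus C_{k-1}^2$ then $x,y\in C_k$ and at least one of $x,y$ equals $a_k$ (otherwise both lie in $C_{k-1}$), so $F(x,y)=a_k\in\{x,y\}$; in particular $F$ is idempotent. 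For monotonicity I would argue directly from the interval structure: writing $C_k=[\ell_k,r_k]$ with $\ell_1=r_1=a_1$, the value assigned to $(x,y)$ is $a_k$ where $k$ is the least index with $x,y\in C_k$, equivalently $a_k$ is the endpoint of the smallest interval $C_j$ that ``captures'' both coordinates; one then checks that enlarging $x$ or $y$ can only move this value monotonically. Concretely, $F(x,y)$ equals $\min\{x,y\}$ when the smaller coordinate lies on the same side of $a_1$ as the larger one is ``reached from,'' and $\max\{x,y\}$ otherwise — it is cleanest to show $F(x,y)\in\{\min,\max\}$ and that the region on which $F=\min$ is a down-set intersected appropriately, but the slickest route is: since $F$ is conservative and symmetric, by Theorem~\ref{thm:main} it suffices to prove monotonicity, and monotonicity along rows and columns follows because as $y$ increases past successive endpoints $r_j$ (resp. decreases past $\ell_j$) the capturing index $k$ is nonincreasing while the endpoint value $a_k$ moves monotonically in the required direction.

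Having shown every run of the algorithm produces an idempotent discrete uninorm, and that there are $2^{n-1}$ runs, the final step is to see the map (runs) $\to$ (idempotent discrete uninorms) is injective; since by Theorem~\ref{thm:main2n} the target has exactly $2^{n-1}$ elements, injectivity will force surjectivity, completing the proof that the algorithm outputs \emph{an arbitrary} such uninorm. Injectivity follows because the run can be recovered from $F$: $a_1$ is the neutral element (the unique isolated point on $\Delta_{L_n}$, by Proposition~\ref{prop:ee}), and then $a_k$ is recovered as the common value $F$ takes on $C_k^2\setminus C_{k-1}^2$, which is nonempty, so the sequence $a_1,\dots,a_n$ — hence the run — is determined. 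The main obstacle I anticipate is the monotonicity verification: it requires carefully tracking how the ``capturing index'' $k$ and the endpoint value $a_k$ vary as one coordinate moves, across the case split according to which side of $a_1$ the coordinates lie on; everything else is bookkeeping about intervals and a counting argument.
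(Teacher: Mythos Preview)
Your proposal is correct and follows essentially the same strategy as the paper: count the runs of the algorithm, check that each run produces a conservative, symmetric, nondecreasing operation (hence an idempotent discrete uninorm via Theorem~\ref{thm:main}), and then invoke the cardinality $2^{n-1}$ from Theorem~\ref{thm:main2n} to conclude that every idempotent discrete uninorm arises this way. The paper does the count more explicitly by showing that the number of runs with fixed neutral element $e$ is $\binom{n-1}{e-1}$ and summing, whereas you argue via the binary branching at each step; conversely, the paper simply asserts that the resulting operations are ``clearly conservative, symmetric, and nondecreasing'' and that they are distinct, while you supply the details (well-definedness from the partition $\{(a_1,a_1)\}\cup\bigcup_k(C_k^2\setminus C_{k-1}^2)$, the injectivity argument recovering the sequence $a_1,\dots,a_n$ from $F$, and the monotonicity check)---so your write-up is in fact more complete on those points.
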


\begin{proof}
For every $e=a_1\in L_n$ chosen in Step 1, denote by $c_n(e)$ the number of possible operations constructed in Step 2. We show by induction on $n$ that $c_n(e)={n-1\choose e-1}$. We clearly have $c_1(1)=1$. It is also easy to see that $c_2(1)=c_2(2)=1$ (see Figure~\ref{fig:5}). Suppose now that $c_{n-1}(e)={n-2\choose e-1}$ for some integer $n\geq 3$ and let us compute $c_n(e)$ for any $e\in L_n$. We clearly have $c_n(1)=c_n(n)=1$ so we can assume that $1<e<n$. It is then easy to see that if $a_2=e-1$ (resp.\ $a_2=e+1$), then the number of possible operations constructed in Step 2 is $c_{n-1}(e-1)={n-2\choose e-2}$ (resp.\ $c_{n-1}(e)={n-2\choose e-1}$). In total we obtain $c_n(e)={n-2\choose e-2}+{n-2\choose e-1}={n-1\choose e-1}$.

Let us now show that the algorithm enables us to generate all the idempotent discrete uninorms on $L_n$. On the one hand, we clearly see that the algorithm enables us to construct $\sum_{e=1}^nc_n(e)=2^{n-1}$ distinct operations. On the other hand, all these operations are clearly conservative, symmetric, and nondecreasing, and hence they are idempotent discrete uninorms by Theorem~\ref{thm:main}. We then conclude the proof by Theorem~\ref{thm:main2n}.
\end{proof}

Figure~\ref{fig:11} gives two possible idempotent discrete uninorms on $L_4$. All the possible idempotent discrete uninorms on $L_2$ and $L_3$ are given in Figures~\ref{fig:5} and \ref{fig:6}, respectively.

\begin{figure}[tbp]
\begin{center}
\begin{small}
\begin{picture}(4,4)
\multiput(0.5,0.5)(0,1){4}{\multiput(0,0)(1,0){4}{\circle*{0.12}}}%
\drawline[1](0.5,3.5)(0.5,0.5)(3.5,0.5)
\drawline[1](1.5,2.5)(1.5,1.5)(2.5,1.5)
\drawline[1](1.5,3.5)(3.5,3.5)(3.5,1.5)
\put(3.7,0.5){\makebox(0,0)[l]{$1$}}
\put(2.7,1.5){\makebox(0,0)[l]{$2$}}
\put(2.3,2.5){\makebox(0,0)[r]{$3$}}
\put(1.3,3.5){\makebox(0,0)[r]{$4$}}
\end{picture}
\hspace{0.10\textwidth}
\begin{picture}(4,4)
\multiput(0.5,0.5)(0,1){4}{\multiput(0,0)(1,0){4}{\circle*{0.12}}}%
\drawline[1](1.5,0.5)(1.5,1.5)(0.5,1.5)
\drawline[1](2.5,0.5)(2.5,2.5)(0.5,2.5)
\drawline[1](3.5,0.5)(3.5,3.5)(0.5,3.5)
\put(0.3,0.5){\makebox(0,0)[r]{$1$}}
\put(0.3,1.5){\makebox(0,0)[r]{$2$}}
\put(0.3,2.5){\makebox(0,0)[r]{$3$}}
\put(0.3,3.5){\makebox(0,0)[r]{$4$}}
\end{picture}
\end{small}
\caption{Two possible idempotent discrete uninorms}
\label{fig:11}
\end{center}
\end{figure}
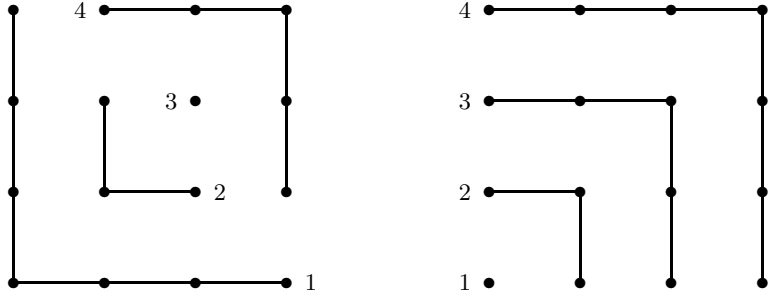

The next theorem provides an algebraic translation of the graphical characterization given in Theorem~\ref{thm:GC}. Let us first recall the concept of single-peaked ordering, as introduced in social choice theory by Black~\cite{Bla48,Bla87} (see \cite{BerPer06} for a more recent reference).

\begin{definition}
A linear ordering $\preceq$ on $L_n$ is said to be \emph{single-peaked} (with respect to the ordering $\leq$) if for any $a,b,c\in L_n$ such that $a<b<c$ we have $b\prec a$ or $b\prec c$.
\end{definition}

Thus defined, single-peakedness is equivalent to saying that from among three distinct elements of $L_n$ the centrist one is never ranked last by $\preceq$. It is then easy to see that any single-peaked linear ordering $a_1\prec\cdots\prec a_n$ on $L_n$ can be constructed as follows.
\begin{enumerate}
\item[1.] Choose $a_1\in L_n$.
\item[2.] For $k=2,\ldots,n$, choose for $a_k$ a closest element to the set $C_{k-1}$ in $L_n\setminus C_{k-1}$, where $C_k=\{a_1,\ldots,a_k\}$.
\end{enumerate}
One can easily show (see, e.g., \cite{BerPer06}) by induction on $n$ that there are exactly $2^{n-1}$ single-peaked linear orderings on $L_n$ (first observe that the last-ranked element $a_n$ of any single-peaked linear ordering $a_1\prec\cdots\prec a_n$ is always $1$ or $n$).

\begin{remark}
The single-peakedness property of a linear ordering $\preceq$ on $L_n$ can be easily checked by plotting a discrete function $f_{\preceq}$ in a rectangular coordinate system in the following way. Place the reference chain $(L_n,\leq)$ on the horizontal axis and the reversed version of the chain $(L_n,\preceq)$ on the vertical axis. The graph of $f_{\preceq}$ is obtained by joining the points $(1,1),\ldots,(n,n)$ by line segments. We then see that the linear ordering $\preceq$ is single-peaked if and only if $f_{\preceq}$ has only one local maximum. Figure~\ref{fig:bb} gives the functions $f_{\preceq}$ and $f_{\preceq'}$ corresponding to the linear orderings $2\prec 3\prec 4\prec 1\prec 5$ and $5\prec' 2\prec' 1\prec' 3\prec' 4$, respectively, on $L_5$. We see that $\preceq$ is single-peaked since $f_{\preceq}$ has only one local maximum while $\preceq'$ is not single-peaked since $f_{\preceq'}$ has two local maxima.
\end{remark}

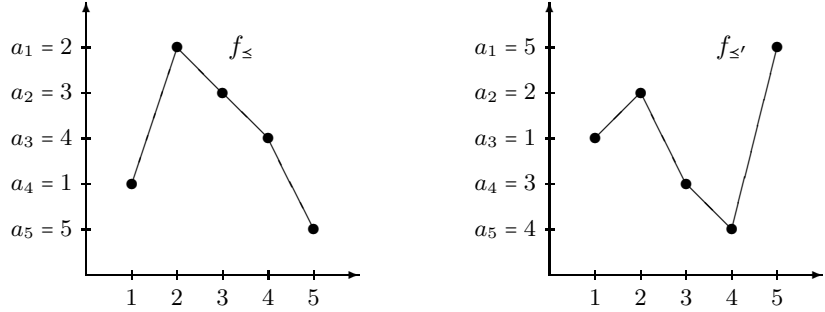
\begin{figure}[tbp]
\begin{center}
\begin{small}
\begin{picture}(5,3.75)
\put(1.5,0.5){\vector(1,0){3}}\put(1.5,0.5){\vector(0,1){3}}
\multiput(2,0.45)(0.5,0){5}{\line(0,1){0.1}}%
\multiput(1.45,1)(0,0.5){5}{\line(1,0){0.1}}%
\put(2,0.25){\makebox(0,0){$1$}}
\put(2.5,0.25){\makebox(0,0){$2$}}
\put(3,0.25){\makebox(0,0){$3$}}
\put(3.5,0.25){\makebox(0,0){$4$}}
\put(4,0.25){\makebox(0,0){$5$}}
\put(1.35,1){\makebox(0,0)[r]{$a_5=5$}}
\put(1.35,1.5){\makebox(0,0)[r]{$a_4=1$}}
\put(1.35,2){\makebox(0,0)[r]{$a_3=4$}}
\put(1.35,2.5){\makebox(0,0)[r]{$a_2=3$}}
\put(1.35,3){\makebox(0,0)[r]{$a_1=2$}}
\drawline[1](2,1.5)(2.5,3)(3.5,2)(4,1)
\put(2,1.5){\circle*{0.12}}
\put(2.5,3){\circle*{0.12}}
\put(3,2.5){\circle*{0.12}}
\put(3.5,2){\circle*{0.12}}
\put(4,1){\circle*{0.12}}
\put(3.2,3){\makebox(0,0){\begin{normalsize}$f_{\preceq}$\end{normalsize}}}
\end{picture}
\begin{picture}(5,3.75)
\put(1.5,0.5){\vector(1,0){3}}\put(1.5,0.5){\vector(0,1){3}}
\multiput(2,0.45)(0.5,0){5}{\line(0,1){0.1}}%
\multiput(1.45,1)(0,0.5){5}{\line(1,0){0.1}}%
\put(2,0.25){\makebox(0,0){$1$}}
\put(2.5,0.25){\makebox(0,0){$2$}}
\put(3,0.25){\makebox(0,0){$3$}}
\put(3.5,0.25){\makebox(0,0){$4$}}
\put(4,0.25){\makebox(0,0){$5$}}
\put(1.35,1){\makebox(0,0)[r]{$a_5=4$}}
\put(1.35,1.5){\makebox(0,0)[r]{$a_4=3$}}
\put(1.35,2){\makebox(0,0)[r]{$a_3=1$}}
\put(1.35,2.5){\makebox(0,0)[r]{$a_2=2$}}
\put(1.35,3){\makebox(0,0)[r]{$a_1=5$}}
\drawline[1](2,2)(2.5,2.5)(3,1.5)(3.5,1)(4,3)
\put(2,2){\circle*{0.12}}
\put(2.5,2.5){\circle*{0.12}}
\put(3,1.5){\circle*{0.12}}
\put(3.5,1){\circle*{0.12}}
\put(4,3){\circle*{0.12}}
\put(3.5,3){\makebox(0,0){\begin{normalsize}$f_{\preceq'}$\end{normalsize}}}
\end{picture}
\end{small}
\caption{$\preceq$ is single-peaked (left) while $\preceq'$ is not (right)}
\label{fig:bb}
\end{center}
\end{figure}


\begin{theorem}\label{thm:QoB}
An operation $F\colon L_n^2\to L_n$ is an idempotent discrete uninorm if and only if there exists a single-peaked linear ordering $\preceq$ such that $F=\max_{\preceq}$, where $\max_{\preceq}$ denotes the maximum operation with respect to $\preceq$.
\end{theorem}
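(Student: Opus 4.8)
The plan is to observe that the algorithm of Theorem~\ref{thm:GC} and the procedure recalled just after the definition of single-peakedness are the \emph{same} iterative scheme: both pick an initial element $a_1\in L_n$ and then, for $k=2,\dots,n$, a closest element $a_k$ to $C_{k-1}=\{a_1,\dots,a_{k-1}\}$ in $L_n\setminus C_{k-1}$, the only difference being that one records a contour plot while the other records the linear order $\preceq$ given by $a_1\prec\cdots\prec a_n$. The first step is therefore to show that these two outputs determine each other, i.e.\ that the operation $F$ produced by the algorithm from the choices $a_1,\dots,a_n$ is exactly $\max_{\preceq}$ for the ordering $a_1\prec\cdots\prec a_n$.

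To prove this, fix $i,j\in\{1,\dots,n\}$ and follow the point $(a_i,a_j)$ through the algorithm. Setting $C_0=\varnothing$, the point lies in $C_k^2$ precisely when $k\geq\max\{i,j\}$, so it belongs to $C_k^2\setminus C_{k-1}^2$ for exactly one index, namely $k=\max\{i,j\}$ (for $k=1$ this is the isolated point $(a_1,a_1)$ treated in Step~1); at that iteration it receives the value $a_k=a_{\max\{i,j\}}$ and it is never revisited afterwards. Hence $F(a_i,a_j)=a_{\max\{i,j\}}$. Since by construction $a_i\preceq a_j$ iff $i\leq j$, we also have $\max_{\preceq}(a_i,a_j)=a_{\max\{i,j\}}$, so $F=\max_{\preceq}$. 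This bookkeeping is the only genuinely computational point of the argument; everything else is matching up definitions.

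With this identification in hand, both directions follow from earlier results. Sufficiency: given a single-peaked ordering $\preceq$, write it as $a_1\prec\cdots\prec a_n$ with each $a_k$ a closest element to $C_{k-1}$ (possible precisely because $\preceq$ is single-peaked, as recalled in the text); running the algorithm on these choices yields, by the previous paragraph, the operation $\max_{\preceq}$, and the proof of Theorem~\ref{thm:GC} shows that every operation so produced is conservative, symmetric and nondecreasing, hence an idempotent discrete uninorm by Theorem~\ref{thm:main}. Necessity: given an idempotent discrete uninorm $F$, Theorem~\ref{thm:GC} (together with Theorem~\ref{thm:main2n}, whose count $2^{n-1}$ equals the number of operations the algorithm can produce) shows that $F$ is obtained by running the algorithm on some choice sequence $a_1,\dots,a_n$; this sequence defines a linear ordering $\preceq$, which is single-peaked because it is produced by exactly the procedure that generates single-peaked orderings, and $F=\max_{\preceq}$ by the identification above.

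The one point to flag is that reducing everything to Theorem~\ref{thm:GC} relies on reading ``closest element to $C_{k-1}$'' in the algorithm and in the single-peaked construction as the same notion (they are stated verbatim), so that executions of the algorithm correspond precisely to single-peaked orderings. If a self-contained argument for sufficiency is preferred, one can instead verify directly that $\max_{\preceq}$ is conservative and symmetric (both immediate) and nondecreasing — the latter using that for $x<y<z$ single-peakedness gives $y\prec\max_{\preceq}(x,z)$, together with a short case analysis on which argument realizes the value — and then invoke Theorem~\ref{thm:main}; but routing through Theorem~\ref{thm:GC} is shorter since the monotonicity is already established there.
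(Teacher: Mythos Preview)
Your proof is correct and follows the same approach as the paper's main proof: both identify the choice sequence $a_1,\dots,a_n$ of Theorem~\ref{thm:GC} with a single-peaked ordering and verify that the resulting operation is $\max_{\preceq}$ (you via the computation $F(a_i,a_j)=a_{\max\{i,j\}}$, the paper via an equivalent case expression). Your closing remark about verifying monotonicity of $\max_{\preceq}$ directly and then invoking Theorem~\ref{thm:main} is exactly the paper's alternative proof.
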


\begin{proof}
Clearly, the algorithm given in Theorem~\ref{thm:GC} provides a single-peaked linear ordering $a_1\prec\cdots\prec a_n$ on $L_n$ together with the operation $F\colon L_n^2\to L_n$ defined as
$$
F(x,y) ~=~
\begin{cases}
a_1, & \text{if $a_1\in\{x,y\}$ and $a_2,\ldots,a_n\notin\{x,y\}$},\\
a_2, & \text{if $a_2\in\{x,y\}$ and $a_3,\ldots,a_n\notin\{x,y\}$},\\
& \vdots\\
a_n, & \text{if $a_n\in\{x,y\}$},
\end{cases}
$$
for all $x,y\in L_n$, or equivalently, $F=\max_{\preceq}$.
\end{proof}

We now provide a direct proof of Theorem~\ref{thm:QoB} without using the algorithm given in Theorem~\ref{thm:GC}.

\begin{proof}[Alternative proof of Theorem~\ref{thm:QoB}]
Since there are as many idempotent discrete uninorms on $L_n$ as single-peaked linear orderings on $L_n$, namely $2^{n-1}$, we simply need to show that for any single-peaked linear ordering $\preceq$ on $L_n$, the operation $F=\max_{\preceq}$ is an idempotent discrete uninorm, or equivalently, it is conservative, symmetric, and nondecreasing (by Theorem~\ref{thm:main}). Only nondecreasing monotonicity needs to be verified. Suppose on the contrary that there exist $a_1,a_2,b\in L_n$, with $a_1<a_2$, such that $F(a_1,b)>F(a_2,b)$, that is,
$$
\textstyle{\max_{\preceq}\{a_1,b\} ~>~ \max_{\preceq}\{a_2,b\}.}
$$
We only have two relevant cases to consider.
\begin{itemize}
\item If $a_1\preceq b\preceq a_2$, then we obtain $a_1<a_2<b$.
\item If $a_2\preceq b\preceq a_1$, then we obtain $b<a_1<a_2$.
\end{itemize}
We immediately reach a contradiction since single-peakedness is violated in each of these two cases.
\end{proof}

\begin{remark}
\begin{enumerate}
\item[(a)] The single-peakedness property can be easily expressed in terms of permutations on $L_n$ (see, e.g., \cite{Fou01}). Indeed, with any linear ordering relation $\preceq$ on $L_n$ we can associate a unique permutation $\sigma$ on $L_n$ defined as
$$
\sigma(a)\preceq\sigma(b)\quad\Leftrightarrow\quad a\leq b{\,},\qquad a,b\in L_n.
$$
Thus, the ordering $\preceq$ (or equivalently, the permutation $\sigma$) is \emph{single-peaked} if for any $a,b,c\in L_n$ such that $a<b<c$ we have $\sigma^{-1}(b)<\sigma^{-1}(a)$ or $\sigma^{-1}(b)<\sigma^{-1}(c)$. Using this terminology, the maximum operation with respect to $\preceq$ is the conjugate operation $\max_{\sigma}\colon L_n^2\to L_n$ defined as
$$
\textstyle{\max_{\sigma}(x,y) ~=~ \sigma(\max\{\sigma^{-1}(x),\sigma^{-1}(y)\})},\qquad x,y\in L_n.
$$
Indeed, for any $x,y\in L_n$, we have $\max_{\preceq}(x,y)=y$ if and only if $x\preceq y$ if and only if $\sigma^{-1}(x)\leq\sigma^{-1}(y)$ if and only if $\max_{\sigma}(x,y)=y$.
\item[(b)] It is clear that in the definition of single-peakedness the condition ``$a<b<c$'' can be equivalently replaced with ``$c<b<a$''. However, if we replace the condition ``$b\prec a$ or $b\prec c$'' with ``$a\prec b$ or $c\prec b$'', then we obtain a dual version of single-peakedness that expresses that from among three distinct elements of $L_n$ the centrist one is never ranked first by $\preceq$. Thus, considering the single-peakedness property instead of its dual version is simply a matter of convention. Note that if we use the dual version, then the maximum operation in Theorem~\ref{thm:QoB} has to be replaced with the minimum operation.
\end{enumerate}
\end{remark}

We end this section by giving a graphical test for checking whether a conservative operation is associative.

\begin{proposition}\label{prop:testCA}
Let $F\colon X^2\to X$ be a conservative operation. Then the following assertions are equivalent.
\begin{enumerate}
\item[(i)] $F$ is not associative.
\item[(ii)] There exist pairwise distinct $a,b,c\in X$ such that $F(a,b)$, $F(a,c)$, $F(b,c)$ are pairwise distinct.
\item[(iii)] There exists a rectangle in $X^2$ such that one of its vertices is on $\Delta_X$ and the three remaining vertices are in $X^2\setminus\Delta_X$ and pairwise disconnected.
\end{enumerate}
\end{proposition}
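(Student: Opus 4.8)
The plan is to prove the two equivalences (ii) $\Leftrightarrow$ (iii) and (i) $\Leftrightarrow$ (ii); the bulk of the work lies in the implication (i) $\Rightarrow$ (ii), while (ii) $\Leftrightarrow$ (iii) is a mere reformulation that does not even use conservativeness.

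For (ii) $\Rightarrow$ (iii), given pairwise distinct $a,b,c$ for which $F(a,b),F(a,c),F(b,c)$ are pairwise distinct, I would take the rectangle $\{(a,b),(a,c),(b,b),(b,c)\}=\{a,b\}\times\{b,c\}$: its vertex $(b,b)$ lies on $\Delta_X$, its three other vertices lie in $X^2\setminus\Delta_X$ since $a\neq b$, $a\neq c$, $b\neq c$, and they are pairwise disconnected by hypothesis. Conversely, for (iii) $\Rightarrow$ (ii), a rectangle is a set $\{p,r\}\times\{q,t\}$ with $p\neq r$ and $q\neq t$; if its vertex on $\Delta_X$ is, say, $(p,q)$, then $p=q$, and the three remaining vertices are $(p,t),(r,p),(r,t)$, the off-diagonality of $(r,t)$ forcing $r\neq t$, so that $p,r,t$ are pairwise distinct. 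Setting $a=r$, $b=p$, $c=t$ then gives $F(a,b)=F(r,p)$, $F(a,c)=F(r,t)$, $F(b,c)=F(p,t)$ pairwise distinct, which is (ii).

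For (ii) $\Rightarrow$ (i) I would use conservativeness: since $F(a,b)\in\{a,b\}$, $F(a,c)\in\{a,c\}$ and $F(b,c)\in\{b,c\}$ are three pairwise distinct elements of $\{a,b,c\}$, a short case split on the value of $F(a,b)$ shows that either $F(a,b)=a$, $F(a,c)=c$, $F(b,c)=b$, or $F(a,b)=b$, $F(a,c)=a$, $F(b,c)=c$. In the first case $F(F(a,b),c)=F(a,c)=c$ while $F(a,F(b,c))=F(a,b)=a$; in the second case $F(F(a,b),c)=F(b,c)=c$ while $F(a,F(b,c))=F(a,c)=a$. Since $a\neq c$, in both cases $F$ is not associative.

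The main obstacle is (i) $\Rightarrow$ (ii). Starting from $x,y,z$ with $F(F(x,y),z)\neq F(x,F(y,z))$, I would first check that $x,y,z$ are pairwise distinct: if two of them coincide, then using conservativeness (hence idempotency) one computes both sides of the associativity equation explicitly and finds that they agree, a contradiction. Next, writing $u=F(x,y)\in\{x,y\}$ and $v=F(y,z)\in\{y,z\}$, so that $F(u,z)\neq F(x,v)$, I would go through the four possibilities for the pair $(u,v)$. The cases $(u,v)=(x,z)$ and $(u,v)=(y,y)$ give $F(u,z)=F(x,v)$ at once and hence cannot occur. If $(u,v)=(x,y)$, then $F(x,z)=F(u,z)\neq F(x,v)=F(x,y)=x$ forces $F(x,z)=z$, yielding the triple $F(x,y)=x$, $F(x,z)=z$, $F(y,z)=y$; if $(u,v)=(y,z)$, then symmetrically $F(x,z)=x$, yielding $F(x,y)=y$, $F(x,z)=x$, $F(y,z)=z$. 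In either surviving case the values $F(x,y),F(x,z),F(y,z)$ are pairwise distinct, so $a=x$, $b=y$, $c=z$ witness (ii). The only real care needed is in the bookkeeping of these case distinctions, which becomes routine once conservativeness has pinned $u$ and $v$ down to elements of $\{x,y,z\}$.
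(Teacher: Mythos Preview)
Your proof is correct. For (ii) $\Leftrightarrow$ (iii) you use exactly the rectangle $\{a,b\}\times\{b,c\}$ that the paper uses, so there is no difference there. For (i) $\Leftrightarrow$ (ii), however, the paper does not argue at all: it simply cites \cite[Lemma~1 and Corollary~1]{MarMayTor03}. Your direct argument---showing that a failure of associativity forces $x,y,z$ to be pairwise distinct, then splitting on the four possible values of $(F(x,y),F(y,z))$ to pin down the triple $(F(x,y),F(x,z),F(y,z))$---is elementary, short, and self-contained, which is a genuine advantage over deferring to an external reference. The only thing you might add for completeness is the brief check (which you allude to but do not spell out) that the case $x=z$ also collapses; it does, but since $F$ is not assumed symmetric one has to look at $F(x,y)$ and $F(y,x)$ separately.
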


\begin{proof}
The equivalence between (i) and (ii) was shown in \cite[Lemma~1 and Corollary~1]{MarMayTor03}. The equivalence between (ii) and (iii) is immediate. Just consider the rectangle constructed on the vertices $(a,c)$, $(b,c)$, $(b,b)$, $(a,b)$ for pairwise distinct $a,b,c\in X$.
\end{proof}

\begin{example}
As an application of Proposition~\ref{prop:testCA}, let us consider the operation $F\colon L_3^2\to L_3$ defined in Figure~\ref{fig:7}. We can see that this operation is not associative because the rectangle constructed on the vertices $(2,2)$, $(3,2)$, $(3,1)$, $(2,1)$ has a vertex on $\Delta_X$ and the three other vertices are in $X^2\setminus\Delta_X$ and pairwise disconnected. To give a second example, Figure~\ref{fig:12} (left) represents an operation that is conservative, associative, and not nondecreasing. Associativity can be verified by considering the six rectangles shown in Figure~\ref{fig:12} (right).
\end{example}

\begin{figure}[tbp]
\begin{center}
\begin{small}
\begin{picture}(3,3)
\multiput(0.5,0.5)(0,1){3}{\multiput(0,0)(1,0){3}{\circle*{0.12}}}%
\drawline[1](1.5,2.5)(1.5,0.5)
\drawline[1](0.5,1.5)(2.5,1.5)
\put(1.5,2.5){\oval(2,0.5)[t]}\put(2.5,1.5){\oval(0.5,2)[r]}
\put(0.6,0.6){\makebox(0,0)[lb]{$1$}}
\put(1.6,1.6){\makebox(0,0)[lb]{$2$}}
\put(2.6,2.6){\makebox(0,0)[lb]{$3$}}
\end{picture}
\hspace{0.10\textwidth}
\begin{picture}(5,3)
\multiput(0.5,0.5)(0,1.4){2}{\multiput(0,0)(1.6,0){3}{\multiput(0,0)(0,0.4){3}{\multiput(0,0)(0.4,0){3}{\circle*{0.075}}}}}%
\put(0.5,2.3){\framebox(0.4,0.4)}
\put(2.5,1.9){\framebox(0.4,0.4)}
\put(3.7,1.9){\framebox(0.4,0.8)}
\put(0.9,0.5){\framebox(0.4,0.8)}
\put(2.1,0.9){\framebox(0.8,0.4)}
\put(3.7,0.5){\framebox(0.8,0.4)}
\end{picture}
\end{small}
\caption{An associative operation (left) and the six rectangles to be checked (right)}
\label{fig:12}
\end{center}
\end{figure}
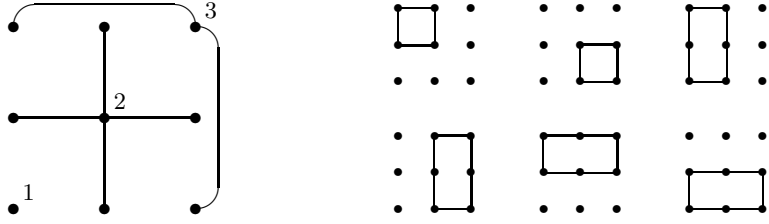

\begin{proposition}\label{prop:rec8n}
If $X=L_n$, then there are exactly $n(n-1)(n-2)$ rectangles in $X^2$ satisfying the following property: one of the vertices is on $\Delta_X$ and the three remaining vertices are in $X^2\setminus\Delta_X$.
\end{proposition}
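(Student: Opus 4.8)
The plan is to count rectangles by first choosing the vertex that lies on the diagonal and then the position of the opposite vertex. A rectangle of the required shape is an axis-parallel rectangle with four vertices $(a,c)$, $(b,c)$, $(b,d)$, $(a,d)$ where $a\neq b$ and $c\neq d$; exactly one of these four points must lie on $\Delta_X$. So I would begin by observing that such a rectangle is determined by an unordered pair $\{a,b\}$ of distinct abscissas together with an unordered pair $\{c,d\}$ of distinct ordinates, and that the condition ``exactly one vertex on $\Delta_X$'' forces $\{a,b\}$ and $\{c,d\}$ to share exactly one common element (if they share none, no vertex is diagonal; if they share both, i.e.\ $\{a,b\}=\{c,d\}$, then two opposite vertices are diagonal).

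Next I would set up the count directly. Let the unique diagonal vertex be $(x,x)$; there are $n$ choices for $x$. The rectangle then has a second vertex sharing the abscissa $x$, namely $(x,y)$ with $y\neq x$, and a second vertex sharing the ordinate $x$, namely $(z,x)$ with $z\neq x$; the fourth vertex is $(z,y)$, which automatically lies off $\Delta_X$ provided $y\neq z$. One must check that this last requirement $y\neq z$ is exactly what is needed: if $y=z$ then $(z,y)=(y,y)$ would also be on $\Delta_X$, giving two diagonal vertices, which is excluded. So the choices are: $x\in L_n$ ($n$ ways), then $y\in L_n\setminus\{x\}$ ($n-1$ ways), then $z\in L_n\setminus\{x,y\}$ ($n-2$ ways), yielding $n(n-1)(n-2)$ ordered selections.

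Then I would argue that this count is already exact, i.e.\ there is no overcounting. Since each rectangle of the required type has a \emph{unique} vertex on $\Delta_X$, the choice of $x$ is forced by the rectangle; and once $x$ is fixed, the two edges of the rectangle emanating from $(x,x)$ are the vertical edge $\{(x,x),(x,y)\}$ and the horizontal edge $\{(x,x),(z,x)\}$, so $y$ and $z$ are also forced (they are the other endpoints of those two edges). Hence the map from rectangles to triples $(x,y,z)$ with $x,y,z$ pairwise distinct is a bijection, giving exactly $n(n-1)(n-2)$ rectangles.

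I do not expect any real obstacle here; the only point requiring a moment's care is the case analysis showing that ``exactly one diagonal vertex'' is equivalent to ``$y\neq z$'' in the parametrization, i.e.\ ruling out the degenerate configuration $\{a,b\}=\{c,d\}$ where two opposite corners sit on $\Delta_X$. Everything else is a straightforward product-rule count, and the bijectivity claim makes the absence of overcounting transparent.
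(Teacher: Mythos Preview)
Your proposal is correct and follows essentially the same approach as the paper: both parametrize the rectangles by ordered triples of pairwise distinct elements of $L_n$ (the diagonal coordinate together with the other abscissa and the other ordinate) and count $n(n-1)(n-2)$ such triples. Your write-up is simply more explicit than the paper's in verifying that this parametrization is a bijection and in ruling out the degenerate case where two opposite corners lie on $\Delta_X$.
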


\begin{proof}
As observed in the proof of Proposition~\ref{prop:testCA}, each of these rectangles is constructed on the vertices $(a,c)$, $(b,c)$, $(b,b)$, $(a,b)$ for some pairwise distinct $a,b,c\in L_n$. We then immediately conclude the proof by observing that there are exactly $n(n-1)(n-2)$ triplets $(a,b,c)\in L_n^3$ such that $a,b,c$ are pairwise distinct.
\end{proof}

\begin{remark}\label{rem:3sf13}
In the special case where the operation $F$ is symmetric, the number of rectangles to be checked reduces to $\frac{1}{6}n(n-1)(n-2)={n\choose 3}$, which represents the number of $3$-element subsets of $L_n$. As an example, the associativity of the conservative and symmetric operation given in Figure~\ref{fig:12} can actually be verified by considering \emph{any} of the six rectangles.
\end{remark}

\section{Bisymmetric operations}

In this section we provide a characterization of the class of discrete uninorms in terms of the bisymmetry (or mediality) property. From this result we immediately derive a new characterization of the class of idempotent discrete uninorms.

\begin{definition}
An operation $F\colon X^2\to X$ is said to be \emph{bisymmetric} if
$$
F(F(x,y),F(u,v))~=~F(F(x,u),F(y,v))
$$
for all $x,y,u,v\in X$.
\end{definition}

\begin{lemma}\label{lemma:bis}
Let $F\colon X^2\to X$ be an operation. Then the following assertions hold.
\begin{enumerate}
\item[(a)] If $F$ is bisymmetric and has a neutral element, then it is associative and symmetric.
\item[(b)] If $F$ is associative and symmetric, then it is bisymmetric.
\item[(c)] If $F$ is bisymmetric and conservative, then it is associative.
\end{enumerate}
\end{lemma}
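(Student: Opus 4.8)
\textbf{Proof plan for Lemma~\ref{lemma:bis}.}

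For part (a), the plan is to exploit the neutral element $e$ directly. To get symmetry, substitute $u=v=e$ in the bisymmetry identity: the left-hand side becomes $F(F(x,y),F(e,e))=F(F(x,y),e)=F(x,y)$, while the right-hand side becomes $F(F(x,e),F(y,e))=F(x,y)$ — that gives nothing, so instead I would set $y=u=e$, so that bisymmetry reads $F(F(x,e),F(e,v))=F(F(x,e),F(e,v))$, again trivial. The right choice is to put $x=v=e$: then bisymmetry gives $F(F(e,y),F(u,e))=F(F(e,u),F(y,e))$, i.e. $F(y,u)=F(u,y)$, which is symmetry. For associativity, use $v=e$ (and symmetry already in hand): bisymmetry becomes $F(F(x,y),F(u,e))=F(F(x,u),F(y,e))$, that is $F(F(x,y),u)=F(F(x,u),y)$; combining this with symmetry ($F(F(x,u),y)=F(y,F(x,u))=F(y,F(u,x))$ after reusing the same identity with relabeled variables) yields the standard derivation of $F(F(x,y),u)=F(x,F(y,u))$. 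I would organize this as: first symmetry, then the "medial with one neutral slot" identity, then massage it into associativity using symmetry.

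Part (b) is the easy direction and requires only a chain of rewritings: using symmetry and associativity repeatedly,
\[
F(F(x,y),F(u,v)) = F(x,F(y,F(u,v))) = F(x,F(F(y,u),v)) = F(x,F(F(u,y),v)) = F(x,F(u,F(y,v))) = F(F(x,u),F(y,v)),
\]
so there is essentially nothing to prove beyond bookkeeping.

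Part (c) is where I expect the real work, since we no longer have a neutral element to plug in. The idea is to use conservativeness to reduce to a case analysis on a triple $a,b,c$, aiming to verify $F(F(a,b),c)=F(a,F(b,c))$. By conservativeness every binary value lies among its arguments, so both sides lie in $\{a,b,c\}$, and if two of $a,b,c$ coincide the identity is immediate by idempotency (conservative $\Rightarrow$ idempotent, Remark~\ref{rem:ConsJ}); so assume $a,b,c$ pairwise distinct. The plan is to apply bisymmetry to a well-chosen substitution that "separates" the three elements — the natural candidate is $F(F(a,b),F(c,c))=F(F(a,c),F(b,c))$, which by idempotency reads $F(F(a,b),c)=F(F(a,c),F(b,c))$, and symmetrically $F(F(a,a),F(b,c))=F(F(a,b),F(a,c))$ gives $F(a,F(b,c))=F(F(a,b),F(a,c))$. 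Thus it suffices to show $F(F(a,c),F(b,c))=F(F(a,b),F(a,c))$. Writing $p=F(a,b),q=F(a,c),r=F(b,c)$, each of $p,q,r$ is one of the two elements it is built from; a short combinatorial argument (using that $F$ is conservative, hence selective, on the at-most-three-element set $\{p,q,r\}\subseteq\{a,b,c\}$, together with Proposition~\ref{prop:testCA} which tells us that non-associativity forces $p,q,r$ pairwise distinct) lets me conclude $F(q,r)=F(p,q)$ in every surviving case. The main obstacle is precisely this last bookkeeping: ensuring the case split on which argument each of $p,q,r$ selects is exhaustive and that bisymmetry (rather than just conservativeness) is genuinely used to kill the would-be non-associative configuration; I would structure it by assuming for contradiction that $p,q,r$ are pairwise distinct, then deriving from the two bisymmetry consequences above a contradiction with selectivity of $F$ on $\{a,b,c\}$.
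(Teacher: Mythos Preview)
Your plan for parts (a) and (b) is correct and essentially matches the paper. One small simplification for (a): the paper obtains associativity in a single step without first establishing symmetry, by writing $z=F(e,z)$ and applying bisymmetry to the quadruple $(x,y,e,z)$, which gives $F(F(x,y),z)=F(F(x,y),F(e,z))=F(F(x,e),F(y,z))=F(x,F(y,z))$ directly. Your detour through the intermediate identity $F(F(x,y),u)=F(F(x,u),y)$ plus symmetry works but is unnecessary.

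The real divergence is in part (c). Your approach---reduce via Proposition~\ref{prop:testCA} to ruling out triples $a,b,c$ with $p=F(a,b)$, $q=F(a,c)$, $r=F(b,c)$ pairwise distinct, then eliminate the two possible selective configurations using the identities $F(F(a,b),c)=F(q,r)$ and $F(a,F(b,c))=F(p,q)$---does go through once the case split is written out (in each of the two configurations one of those identities forces two of $a,b,c$ to coincide). But the paper bypasses all of this with a two-line direct argument: for arbitrary $x,y,z$, conservativeness gives $F(x,z)\in\{x,z\}$; if $F(x,z)=x$ then
\[
F(F(x,y),z)=F(F(x,y),F(z,z))=F(F(x,z),F(y,z))=F(x,F(y,z)),
\]
and if $F(x,z)=z$ then
\[
F(F(x,y),z)=F(F(x,y),F(x,z))=F(F(x,x),F(y,z))=F(x,F(y,z)).
\]
No appeal to Proposition~\ref{prop:testCA}, no pairwise-distinct reduction, no enumeration of which argument each of $p,q,r$ selects---just one application of bisymmetry in each branch after rewriting $z$ as the appropriate idempotent expression. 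Your route is valid, but the trick you were missing is that conservativeness lets you rewrite the lone $z$ as either $F(z,z)$ or $F(x,z)$, whichever makes bisymmetry finish the job immediately.
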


\begin{proof}
(a) Let $x,y,z\in X$ and let $e$ be the neutral element of $F$. Then we have
$$
F(F(x,y),z) ~=~ F(F(x,y),F(e,z)) ~=~ F(F(x,e),F(y,z)) ~=~ F(x,F(y,z))
$$
and
$$
F(x,y) ~=~ F(F(e,x),F(y,e)) ~=~ F(F(e,y),F(x,e)) ~=~ F(y,x).
$$

(b) The result immediately follows from the identities
$$
F(F(x,y),F(u,v)) ~=~ F(x,F(y,F(u,v))) ~=~ F(x,F(F(y,u),v))),
$$
which show that $F(F(x,y),F(u,v))$ is symmetric on $y$ and $u$.

(c) Let $x,y,z\in X$. Then we have $F(x,z)\in\{x,z\}$. If $F(x,z)=x$, then
$$
F(F(x,y),z) ~=~ F(F(x,y),F(z,z)) ~=~ F(F(x,z),F(y,z)) ~=~ F(x,F(y,z)).
$$
If $F(x,z)=z$, we have
$$
F(F(x,y),z) ~=~ F(F(x,y),F(x,z)) ~=~ F(F(x,x),F(y,z)) ~=~ F(x,F(y,z)).
$$
This shows that $F$ is associative.
\end{proof}

\begin{remark}\label{rem:AM}
\begin{enumerate}
\item[(a)] Using Lemma~\ref{lemma:bis} we immediately see that if an operation is conservative and symmetric, then it is associative if and only if it is bisymmetric. Combining this observation with Proposition~\ref{prop:testCA} and Remark~\ref{rem:3sf13} provides a test for bisymmetry under conservativeness and symmetry.
\item[(b)] We observe that the conjunction of bisymmetry and symmetry implies neither associativity nor the existence of a neutral element (take for instance the arithmetic mean over the reals).
\item[(c)] Also, the conjunction of conservativeness and associativity does not imply bisymmetry, even in the presence of a neutral element. We give in Figure~\ref{fig:13} an operation that is conservative, associative, and has $1$ as the neutral element. However it is not bisymmetric since $F(F(1,2),F(3,2))\neq F(F(1,3),F(2,2))$.
\item[(d)] The first two assertions of Lemma~\ref{lemma:bis} were already known in the literature. Assertion (a) was proved for finite chains in \cite[Lemma~3.3]{MasMonTor03} and \cite[Lemma~3]{SuLiuPed17}. Assertion (b) was proved for the real unit interval $[0,1]$ in \cite[p.~180]{San05}. These proofs are purely algebraic and work for any nonempty set $X$. To our knowledge, assertion (c) was previously unknown.
\end{enumerate}
\end{remark}

\begin{figure}[tbp]
\begin{center}
\begin{small}
\begin{picture}(3,3)
\multiput(0.5,0.5)(0,1){3}{\multiput(0,0)(1,0){3}{\circle*{0.12}}}%
\drawline[1](0.5,1.5)(1.5,1.5)
\drawline[1](1.5,2.5)(1.5,0.5)
\drawline[1](2.5,2.5)(2.5,0.5)
\put(1.5,2.5){\oval(2,0.5)[t]}
\put(0.7,0.5){\makebox(0,0)[l]{$1$}}
\put(1.7,1.5){\makebox(0,0)[l]{$2$}}
\put(2.7,2.5){\makebox(0,0)[l]{$3$}}
\end{picture}
\end{small}
\caption{An associative operation that is not bisymmetric}
\label{fig:13}
\end{center}
\end{figure}
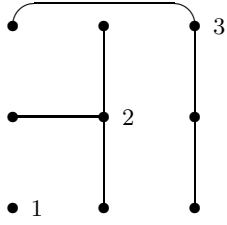


Using Lemma~\ref{lemma:bis}, we immediately derive the following two characterizations. The first one was already established in \cite[Corollary 1]{SuLiuPed17} in the special setting of aggregation operators (i.e., operations $F\colon L_n^2\to L_n$ that are nondecreasing and that satisfy $F(1,1)=1$ and $F(n,n)=n$).

\begin{theorem}\label{thm:mainb}
An operation $F\colon L_n^2\to L_n$ is bisymmetric, nondecreasing, and has a neutral element if and only if it is a discrete uninorm.
\end{theorem}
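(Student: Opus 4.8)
The plan is to obtain Theorem~\ref{thm:mainb} as a direct consequence of Lemma~\ref{lemma:bis}, with essentially no further work. Write $P$ for the conjunction ``$F$ is bisymmetric, nondecreasing, and has a neutral element'' and $Q$ for ``$F$ is a discrete uninorm'', i.e., ``$F$ is associative, symmetric, nondecreasing, and has a neutral element''. The goal is to prove $P\Leftrightarrow Q$, and the strategy is to handle the two implications separately, each time carrying over the shared hypotheses (nondecreasing monotonicity, existence of a neutral element) for free and using Lemma~\ref{lemma:bis} to convert between the pair (bisymmetry) and the pair (associativity + symmetry).

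For $P\Rightarrow Q$: assume $F$ is bisymmetric, nondecreasing, and has a neutral element. By Lemma~\ref{lemma:bis}(a), a bisymmetric operation possessing a neutral element is automatically associative and symmetric. Since nondecreasing monotonicity and the neutral element are already assumed, $F$ satisfies all four defining properties of a discrete uninorm. For $Q\Rightarrow P$: assume $F$ is a discrete uninorm, hence associative, symmetric, nondecreasing, with a neutral element. By Lemma~\ref{lemma:bis}(b), an associative and symmetric operation is bisymmetric; combined with the remaining two hypotheses this yields $P$. This completes the equivalence.

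As for the main obstacle: at this stage there is none — all the substance has been absorbed into Lemma~\ref{lemma:bis}, whose proof is purely algebraic (in particular assertion (a), which does the work in the direction $P\Rightarrow Q$, and assertion (b) for the converse). For the strengthening to arbitrary (not necessarily finite) chains mentioned in the introduction, I would simply observe that Lemma~\ref{lemma:bis} is stated for an arbitrary set $X$, and that both ``nondecreasing'' and ``has a neutral element'' are meaningful on any chain, so the very same two-line argument applies verbatim. Finally, since a conservative operation is idempotent (Remark~\ref{rem:ConsJ}) and, conversely, combining this theorem with Theorem~\ref{thm:main} characterizes when a discrete uninorm is idempotent, one immediately reads off the announced characterization of the idempotent discrete uninorms as those operations that are idempotent, bisymmetric, nondecreasing, and have a neutral element.
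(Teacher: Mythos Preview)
Your proposal is correct and matches the paper's approach exactly: the paper simply states that Theorem~\ref{thm:mainb} (and its corollary) follow immediately from Lemma~\ref{lemma:bis}, using part~(a) for the forward direction and part~(b) for the converse, just as you do. Your remarks about extension to arbitrary chains and the derivation of Corollary~\ref{cor:mainb} also mirror the paper's own comments following the theorem.
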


\begin{corollary}\label{cor:mainb}
An operation $F\colon L_n^2\to L_n$ is idempotent (or conservative), bisymmetric, nondecreasing, and has a neutral element if and only if it is an idempotent discrete uninorm.
\end{corollary}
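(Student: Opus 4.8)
The plan is to read off the corollary directly from Theorem~\ref{thm:mainb}, with Lemma~\ref{lemma:bis} and the already-observed equivalence between idempotency and conservativeness for discrete uninorms doing the remaining bookkeeping. No substantial new argument is needed; the only thing to be careful about is tracking the two alternative hypotheses ``idempotent or conservative'' and checking that they are interchangeable here.

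For the necessity direction I would start from an idempotent discrete uninorm $F$. By definition $F$ is associative, symmetric, nondecreasing, has a neutral element, and is idempotent. Lemma~\ref{lemma:bis}(b) converts associativity and symmetry into bisymmetry, and Theorem~\ref{thm:Deb1} (equivalently, Remark~\ref{rem:ccons}) shows that $F$ is conservative. Hence $F$ is idempotent \emph{and} conservative, bisymmetric, nondecreasing, and has a neutral element, which covers both readings of the left-hand side of the equivalence.

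For the sufficiency direction, suppose $F$ is bisymmetric, nondecreasing, has a neutral element, and is idempotent or conservative. Theorem~\ref{thm:mainb} immediately yields that $F$ is a discrete uninorm. It remains only to note that $F$ is idempotent: this is immediate if idempotency was the hypothesis, and it follows from Remark~\ref{rem:ConsJ} (every conservative operation is idempotent) otherwise. Therefore $F$ is an idempotent discrete uninorm.

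The main (and only minor) point of care is the parenthetical ``(or conservative)'': one should remark that, by the corollary following Theorem~\ref{thm:main} — or directly by Theorem~\ref{thm:Deb1} together with Remark~\ref{rem:ConsJ} — idempotency and conservativeness coincide on the class of discrete uninorms, so the two versions of the statement are genuinely equivalent rather than one being strictly stronger. There is no real obstacle beyond this.
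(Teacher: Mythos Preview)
Your proposal is correct and matches the paper's approach: the paper simply states that both Theorem~\ref{thm:mainb} and Corollary~\ref{cor:mainb} are immediately derived from Lemma~\ref{lemma:bis}, and your write-up is a faithful unpacking of that derivation, including the bookkeeping for the ``(or conservative)'' parenthetical via Remark~\ref{rem:ConsJ} and Theorem~\ref{thm:Deb1}.
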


Since Theorem~\ref{thm:mainb} and Corollary~\ref{cor:mainb} are derived from Lemma~\ref{lemma:bis} only, we immediately see that these results still hold on any chain, or even on any ordered set, provided the discrete uninorm is replaced with a uninorm (i.e., a binary operation that is associative, symmetric, nondecreasing, and has a neutral element).

\begin{remark}
We observe that bisymmetry is necessary in Corollary~\ref{cor:mainb}. For instance the operation $F\colon L_4^2\to L_4$ whose contour plot is depicted in Figure~\ref{fig:14} is conservative, nondecreasing, and has a neutral element. However, it is neither associative nor symmetric.
\end{remark}

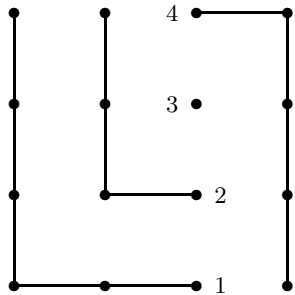
\begin{figure}[tbp]
\begin{center}
\begin{small}
\begin{picture}(4,4)
\multiput(0.5,0.5)(0,1){4}{\multiput(0,0)(1,0){4}{\circle*{0.12}}}%
\drawline[1](0.5,3.5)(0.5,0.5)(2.5,0.5)
\drawline[1](1.5,3.5)(1.5,1.5)(2.5,1.5)
\drawline[1](2.5,3.5)(3.5,3.5)(3.5,0.5)
\put(2.7,0.5){\makebox(0,0)[l]{$1$}}
\put(2.7,1.5){\makebox(0,0)[l]{$2$}}
\put(2.3,2.5){\makebox(0,0)[r]{$3$}}
\put(2.3,3.5){\makebox(0,0)[r]{$4$}}
\end{picture}
\end{small}
\caption{An operation that is neither associative nor symmetric}
\label{fig:14}
\end{center}
\end{figure}

\section{Concluding remarks}

In this paper we established three main characterizations of the class of idempotent discrete uninorms on finite chains. Two of them are of axiomatic nature (see Theorem~\ref{thm:main} and Corollary~\ref{cor:mainb}) while the third one is of graphical nature (see Theorem~\ref{thm:GC} and its algebraic version Theorem~\ref{thm:QoB}). These axiomatic characterizations are essentially based on conservativeness, which is a rather strong property that can be easily justified in some contexts (see Remark~\ref{rem:ConsJ}). The graphical characterization is a rather surprising result that shows that the idempotent discrete uninorms on a given finite chain can be very easily generated from a graphical viewpoint. This result contrasts with the rather intricate descriptive characterization given in Theorem~\ref{thm:Deb1}.

In this work we put a particular emphasis on the graphical properties of operations by looking into their contour plots. In particular, we have presented graphical tests to verify whether an operation:
\begin{itemize}
\item is conservative (Proposition~\ref{prop:Tcons}),
\item has a neutral element (Proposition~\ref{prop:Te3}),
\item is an idempotent discrete uninorm (Theorem~\ref{thm:GC}),
\end{itemize}
and whether a conservative operation:
\begin{itemize}
\item has a neutral element (Proposition~\ref{prop:ee}),
\item is associative (Proposition~\ref{prop:testCA}).
\end{itemize}

In view of these results, some questions emerge naturally, and we end this paper by listing a few below.

\begin{enumerate}
\item[(a)] Enumerate and/or generate all the conservative (resp.\ conservative and associative, etc.) operations on a finite chain.
\item[(b)] Provide a graphical test for checking whether a conservative operation is bisymmetric. Using Remark~\ref{rem:AM}(a) we already have such a test for symmetric operations.
\item[(c)] Knowing that $F\colon L_n^2\to L_n$ is bisymmetric and symmetric, does it imply that it is associative and that it has a neutral element? We know from Remark~\ref{rem:AM}(b) that this is not true on an arbitrary chain.
\end{enumerate}

\section*{Acknowledgments}

The authors would like to thank Gergely Kiss for fruitful discussions and valuable remarks. This research is partly supported by the internal research project R-AGR-0500 of the University of Luxembourg.


\begin{thebibliography}{99}

\bibitem{BerPer06}
S.~Berg and T.~Perlinger.
\newblock Single-peaked compatible preference profiles: some combinatorial results.
\newblock {\em Social Choice and Welfare} 27(1):89--102, 2006.

\bibitem{Bla48}
D.~Black.
\newblock On the rationale of group decision-making.
\newblock {\em J Polit Economy}, 56(1):23--34, 1948

\bibitem{Bla87}
D.~Black.
\newblock {\em The theory of committees and elections}.
\newblock Kluwer Academic Publishers, Dordrecht, 1987.

\bibitem{CouMar10}
M.~Couceiro and J.-L.~Marichal.
\newblock Representations and characterizations of polynomial functions on chains.
\newblock {\em J. of Mult.-Valued Logic \& Soft Computing}, 16:65--86, 2010.

\bibitem{CouDevMar18}
M.~Couceiro, J.~Devillet, and J.-L.~Marichal.
\newblock On idempotent discrete uninorms.
\newblock In: V.\ Torra, R.\ Mesiar, B.\ De Baets (eds.), \emph{Aggregation Functions in Theory and in Practice}. Series: Advances in Intelligent Systems and Computing, Vol.\ 581. Springer, Berlin, pp.\ 147--153, 2018.

\bibitem{CzoDre84}
E.~Czoga{\l}a and J.~Drewniak.
\newblock Associative monotonic operations in fuzzy set theory.
\newblock {\em Fuzzy Sets and Systems}, 12(3):249--269, 1984.

\bibitem{DeBFodRuiTor09}
B.~De Baets, J.~Fodor, D.~Ruiz-Aguilera, and J.~Torrens.
\newblock Idempotent uninorms on finite ordinal scales.
\newblock {\em Int. J. of Uncertainty, Fuzziness and Knowledge-Based Systems}, 17(1):1--14, 2009.

\bibitem{DeBMes03}
B.~De Baets and R.~Mesiar.
\newblock Discrete triangular norms.
\newblock in {\em Topological and Algebraic Structures in Fuzzy Sets, A Handbook of Recent Developments in the Mathematics of Fuzzy Sets}, Trends in Logic, eds. S.\ Rodabaugh and E.\ P.\ Klement (Kluwer Academic Publishers), 20, pp. 389--400, 2003.

\bibitem{Fod00}
J.~Fodor.
\newblock Smooth associative operations on finite ordinal scales.
\newblock {\em IEEE Trans. Fuzzy Systems}, 8:791--795, 2000.

\bibitem{Fou01}
E.~Foundas.
\newblock Some results of Black's permutations.
\newblock {\em Journal or Discrete Mathematical Sciences and Cryptography}, 4(1):47--55, 2001.

\bibitem{LiLiuFod15}
G.~Li, H.-W.~Liu, and J.~Fodor.
\newblock On weakly smooth uninorms on finite chain.
\newblock {\em Int. J. Intelligent Systems}, 30:421--440, 2015.

\bibitem{MarMayTor03}
J.~Mart\'{\i}n, G.~Mayor, and J.~Torrens.
\newblock On locally internal monotonic operations.
\newblock {\em Fuzzy Sets and Systems} 137:27-–42, 2003.

\bibitem{MasMayTor99}
M.~Mas, G.~Mayor and J.~Torrens.
\newblock t-operators and uninorms on a finite totally ordered set.
\newblock {\em Int. J. Intelligent Systems}, 14:909--922, 1999.

\bibitem{MasMonTor03}
M.~Mas, M.~Monserrat and J.~Torrens.
\newblock On bisymmetric operators on a finite chain.
\newblock {\em IEEE Trans. Fuzzy Systems}, 11:647--651, 2003.

\bibitem{MasMonTor04}
M.~Mas, M.~Monserrat and J.~Torrens.
\newblock On left and right uninorms on a finite chain.
\newblock {\em Fuzzy Sets and Systems}, 146:3--17, 2004.

\bibitem{MasMonTor11}
M.~Mas, M.~Monserrat and J.~Torrens.
\newblock Smooth t-subnorms on finite scales.
\newblock {\em Fuzzy Sets and Systems}, 167:82-–91, 2011.

\bibitem{MaySunTor05}
G.~Mayor, J.~Su\~{n}er and J.~Torrens.
\newblock Copula-like operations on finite settings.
\newblock {\em IEEE Trans. Fuzzy Systems}, 13:468-–477, 2005.

\bibitem{MayTor05}
G.~Mayor and J.~Torrens.
\newblock Triangular norms in discrete settings.
\newblock in {\em Logical, Algebraic, Analytic, and Probabilistic Aspects of Triangular Norms}, eds. E.\ P.\ Klement and R.\ Mesiar (Elsevier, Amsterdam), pp. 189--230, 2005.

\bibitem{Mes16}
A.~Mesiarov{\'a}-Zem{\'a}nkov{\'a}.
\newblock A note on decomposition of idempotent uninorms into an ordinal sum of singleton semigroups.
\newblock {\em Fuzzy Sets and Systems}, 299:140--145, 2016.

\bibitem{PouRosSto96}
M.~Pouzet, I.~G.~Rosenberg, and M.~G.~Stone.
\newblock A projection property.
\newblock {\em Algebra Universalis}, 36(2):159--184, 1996.

\bibitem{RuiTor15}
D.~Ruiz-Aguilera and J.~Torrens.
\newblock A characterization of discrete uninorms having smooth underlying operators.
\newblock {\em Fuzzy Sets and Systems}, 268:44-–58, 2015.

\bibitem{San05}
W.~Sander.
\newblock Some Aspects of Functional Equations.
\newblock In: E.~P.~Klement and R.~Mesiar (eds.) {\em Logical, Algebraic, Analytic and Probabilistic Aspects of Triangular Norms}, pp.\ 143--187. Elsevier, Amsterdam, 2005.

\bibitem{SuLiu17}
Y.~Su and H.-W.~Liu.
\newblock Discrete aggregation operators with annihilator.
\newblock {\em Fuzzy Sets and Systems}, 308:72-–84, 2017.

\bibitem{SuLiuPed17}
Y.~Su, H.-W.~Liu, and W.~Pedrycz.
\newblock On the discrete bisymmetry.
\newblock {\em IEEE Trans. Fuzzy Systems}. To appear. DOI:10.1109/TFUZZ.2016.2637376

\end{thebibliography}
\end{document}